\newcommand{\oM}{\overline{\mathcal M}}
\def\oM{{\overline{\mathcal{M}}}}
\newcommand{\DR}{\mathrm{DR}}
\newcommand{\SRT}{\mathrm{SRT}}
\newcommand{\ZZ}{\mathbb{Z}}
\newcommand{\QQ}{\mathbb{Q}}
\newcommand{\Ch}{\Omega}
\newcommand{\hhh}{\text{{\Large {\<h>}}}}
\newtheorem{theorem}{Theorem}[section]
\newtheorem{lemma}[theorem]{Lemma}
\theoremstyle{remark}
\newtheorem{remark}[theorem]{Remark}
\theoremstyle{definition}
\newtheorem{definition}[theorem]{Definition}
\newcommand{\bOm}{{\mathbb{\Pi}}}
\newcommand{\bOmT}{{\hhh \hspace{-7pt} \hhh}}
\newcommand{\OmA}{{\Omega\hspace{-7pt}\Omega\hspace{-5pt}A}}
\newcommand{\bUp}{{\Upsilon\hspace{-7pt}\Upsilon}}
\newcommand{\bD}{{\mathbb{D}}}
\DeclareFontFamily{U}{cbgreek}{}
\DeclareFontShape{U}{cbgreek}{m}{n}{
        <-6>    grmn0500
        <6-7>   grmn0600
        <7-8>   grmn0700
        <8-9>   grmn0800
        <9-10>  grmn0900
        <10-12> grmn1000
        <12-17> grmn1200
        <17->   grmn1728
      }{}
\DeclareFontShape{U}{cbgreek}{bx}{n}{
        <-6>    grxn0500
        <6-7>   grxn0600
        <7-8>   grxn0700
        <8-9>   grxn0800
        <9-10>  grxn0900
        <10-12> grxn1000
        <12-17> grxn1200
        <17->   grxn1728
      }{}
\newcommand{\normalorbold}{%
  \ifnum\pdf@strcmp{\math@version}{bold}=\z@ bx\else m\fi
}
\numberwithin{equation}{section}
\begin{document}

\title[$\Omega$-classes and double ramification cycles]{Rooted trees with level structures, $\Omega$-classes and double ramification cycles}

\author[X.~Blot]{Xavier Blot}
\address{X.~B.: Korteweg-de Vriesinstituut voor Wiskunde, Universiteit van Amsterdam, Postbus 94248, 1090GE Amsterdam, Nederland}
\email{x.j.c.v.blot@uva.nl}	

\author[D.~Lewa\'nski]{Danilo Lewa\'nski}
\address{D.~L.: Dipartimento di Matematica, Informatica e Geoscienze, Universit\`a degli studi di Trieste,
	Via Weiss 2, 34128	Trieste, Italia}
\email{danilo.lewanski@units.it} 

\author[S.~Shadrin]{Sergey Shadrin}
\address{S.~S.: Korteweg-de Vriesinstituut voor Wiskunde, Universiteit van Amsterdam, Postbus 94248, 1090GE Amsterdam, Nederland}
\email{s.shadrin@uva.nl}	

\begin{abstract} We prove a new system of relations in the tautological ring of the moduli space of curves involving stable rooted trees with level structure decorated by the top Chern class of the Hodge bundle and $\Omega$-classes and double ramification structures. In particular, this resolves a recent conjecture on these relations as well as connects with one of the two sides of the recently established DR/DZ equivalence between the integrable hierarchies constructions of Buryak and of Dubrovin--Zhang. 
\end{abstract}

\maketitle
\tableofcontents

\section{Introduction}

The goal of this work is to establish a conjecture previously proposed in \cite{BLRS}, which expresses the so-called $A$-classes emerging in the theory of the DR/DZ (double ramification / Dubrovin-Zhang) correspondence for integrable hierarchies \cite{BDGR1,BGR19} \textemdash \ in terms of sums of stable trees decorated by the products of the so-called $\lambda_g$-class and the so-called $\Omega$-class. The ideas and techniques introduced in this paper were recently used to establish the strong DR/DZ conjecture \cite{BLS}.\\

Let us recall that the motivation behind the original conjecture in \cite{BLRS} arises from a wider picture revolving around the DR/DZ correspondence and its quantisation.

The DR side of the equivalence stars double ramification (DR) hierarchies of PDEs associated to cohomological field theories, introduced by Buryak in~\cite{Bur} by means of the double ramification cycle and extended to the case of F-cohomological field theories in \cite{BR21,ABLR21}.

The DZ side stars Dubrovin-Zhang hierarchies \cite{DZ,BPS1,BPS2} of Hamiltonian equations. The DZ side historically arises earlier, although its construction does not generalise to non-semisimple CohFTs (though the results of~\cite{BLS,BS22} imply the construction for possibly non-semisimple tautological CohFTs) and no quantisation procedure is so far known, whereas both are indeed the case for the double ramification hierarchy. 

The two constructions were conjectured to be Miura equivalent, see~\cite{Bur}, in the sense of an explicit polynomial transformation of the variables bringing the equations from one side onto the other side of the correspondence. 
This correspondence was then refined as a relation between tau functions of both sides of the correspondence, called the strong DR/DZ conjecture, see \cite{BDGR1}. This relation of tau functions was finally lifted to a tautological relation between the $A$-class (DR side)  and the $B$-class (DZ side) in \cite{BGR19}, and generalized in \cite{BS22} to involve a larger class of F-cohomological field theories (as well as polynomiality of conservation laws of the latter). This final tautological relation was proved in the Gorenstein quotient of the moduli space of curves in \cite{BLS}, which is sufficient the prove the correspondence for tautological F-CohFT and the polynomiality of the DZ hierarchies in that case.

As mentioned above, another meaningful advantage of the Buryak's double ramification hierarchy is the construction of its quantisation, introduced in~\cite{BR16}, and of their quantum tau functions, introduced in~\cite{BDGR2}. These quantum tau functions were first studied in~\cite{Blot}, and conjectured to be represented by intersection numbers involving the $\Omega$-class and the $\lambda$-class, see~\cite{BlotLew}. In particular, this gives a new interpretation of tau functions of the DR hierarchies in the classical setting. This led to the formulation of a new conjecture in \cite{BLRS} relating the $A$-class with this time a class, denoted by $\bOmT$, defined as a sum stable trees decorated by products of the $\Omega$-class and the $\lambda_g$-class. 
\\

To summarize, we have three classes involved and they are conjectured to be equal: the $A$-class (DR side), the $B$-class (DZ side) and the $\bOmT$-class which is a natural class to represent the tau functions from the point of view of quantization. In addition, the $B$-class and the $\bOmT$-class are mirror to each other in the sense that they are both defined by the same stable trees (namely rooted stable trees with level structures), but the first one is decorated with $\psi$-classes rather the second one with products of the $\Omega$-class with the $\lambda_g$-class.

\[\begin{tikzcd}
& |[alias=C]| \bOmT \arrow[leftrightarrow, dl,"\text{this paper}"'] \arrow[leftrightarrow, dr, dashed, "\text{no direct proof yet}"]  &  \\[2em]
A  \arrow[leftrightarrow, rr, "\text{DR/DZ equiv.~\cite{BLS}}", "\text{conjectured in \cite{Bur, BDGR1}}"'] & & B
\end{tikzcd}\]
We prove in this paper the correspondence between the $A$-class and the $\bOmT$-class. By mirroring our approach we prove in \cite{BLS} the correspondence $A=B$ in the Gorenstein quotient. More precisely, the proof of this paper involves two tools:

\begin{enumerate}
    \item a virtual localization formula giving a tautological relation between DR cycles and $\Omega$-classes,
    \item a combinatorial argument involving rooted trees with level  structures.
\end{enumerate}
Since the $B$-class and the $\bOmT$-class are mirror of each other, it is sufficient to replace the role of the $\Omega$-class in the relation of (1) by $\psi$-classes to obtain a proof of $A=B$. In \cite{BLS}, we call this relation the master relation and prove it in the Gorenstein quotient. This results in a proof of $A=B$ in the Gorenstein quotient. The link between the $B$-class and the $\bOmT$-class is therefore also indirectly  established in the Gorenstein quotient through the DR/DZ correspondence, although no direct proof is as of yet available, despite being desirable.

As remarked in \cite{BLRS}, there is a \textit{per sè} interest in the involvement of the $\Omega$-classes within the framework of the DR/DZ equivalence, as they are by now playing a key role in more than twenty enumerative problems arising from algebraic geometry, integrability, random matrix models and string theory. See for instance \cite{rELSV, JPPZ, BKL, spin, Euler, Double} and references therein.

\subsection{Organization of the paper} The paper is not entirely self-contained, as we borrow several constructions and ideas from a number of previous works on similar structures, see~\cite{BLS} and~\cite{BGR19,BHS22, AS09, BS22}, and especially from the paper in which the conjecture this paper solves was formulated~\cite{BLRS}; the material presented in the last paper is necessary to follow our arguments. We assume the reader to be familiar with the standard notation for various classes in the tautological ring of the moduli spaces of curves and the usage of stable graphs for their expressions.

The introduction to the $\Omega$-classes and their properties is provided in Section~\ref{sec:Omega:classes}, where we recall their basic properties as well as the specialisation of the $\Omega$-class to what we call $\hhh$-class. 
We present our results in Section~\ref{sec:AOmega} assuming the reader is familiar with the standard stratification of the moduli spaces of curves and $\psi$-, $\kappa$-, and $\lambda$-classes in the tautological ring.
In Section~\ref{sec:virloc} we employ vitual localization techniques  in order to obtain linear systems of tautological relations which are employed in Section~\ref{sec:mainthm} to prove the main theorem of the paper. 


\subsection{Conventions and notation} We work with classes in the tautological ring of the moduli space of stable curves $R^*(\oM_{g,n})$; by ``cohomological degree'' by slight abuse of terminology we refer to the degree in the Chow ring, i.e. half of the actual degree in cohomology.  For a tautological class $c$ of mixed degree we denote by $(c)_i$ its homogenenous component of degree $i$ in Chow and $2i$ in cohomology.

\subsection{Acknowledgments} X.~B. and S.~S. are supported by the Netherlands Organization for Scientific Research. D.~L. is supported by the University of Trieste, by the INFN under the national project MMNLP, by the INdAM group GNSAGA, and thanks the University of Amsterdam for the hospitality during the research visit in which this project was developed.

\section{\texorpdfstring{$\Omega$}{Omega}-classes and their properties} \label{sec:Omega:classes}

\subsection{Definition and formula}
In 1983 Mumford \cite{Mum83} derived a formula for the Chern character of the Hodge bundle on the moduli space of curves $\overline{\mathcal{M}}_{g,n}$ in terms of tautological classes and Bernoulli numbers. Such class appears for instance in the ELSV formula \cite{ELSV01}, relating its integrals against monomials of $\psi$-classes with simple Hurwitz numbers.
%
%
A generalisation of Mumford's formula introducing the $s$-parameter was derived in 2005 by Bini~\cite{Bini05}. The generalisation we are going to employ introduces further $r$-th roots of a universal line bundle and was computed in 2008 by Chiodo~\cite{Chi08}. The moduli space $\overline{\mathcal{M}}_{g,n}$ is substituted by the proper moduli stack of generalized $r$-spin structures $\overline{\mathcal{M}}_{g}^{r,s}(a_1,\dots,a_n)$ which parametrizes $r$-th roots of the line bundle
\begin{equation*}
	\omega_{\log}^{\otimes s}\biggl(-\sum_{i=1}^n a_i p_i \biggr),
 \end{equation*}
where $\omega_{\log} = \omega(\sum_i p_i)$ is the log-canonical bundle, $r$ and $s$ are integers with $r$ positive, and $a_1,\dots,a_n$ are integers 
satisfying the modular constraint
\begin{equation*}
	a_1 + a_2 + \cdots + a_n \equiv (2g-2+n)s \pmod{r}.
\end{equation*}
This condition guarantees the existence of a line bundle whose $r$-th tensor power is isomorphic to $\omega_{\log}^{\otimes s}(-\sum_i a_i p_i)$. Let $\pi \colon \overline{\mathcal{C}}_{g}^{r,s}(a_1,\dots,a_n) \to \overline{\mathcal{M}}_{g}^{r,s}(a_1,\dots,a_n)$ be the universal curve, and $\mathcal{L} \to \overline{\mathcal C}_{g}^{r,s}(a_1,\dots,a_n)$ the universal $r$-th root. 
There is moreover a natural forgetful morphism
\begin{equation*}
	\epsilon \colon
	\overline{\mathcal{M}}^{r,s}_{g}(a_1,\dots,a_n)
	\longrightarrow
	\overline{\mathcal{M}}_{g,n}
\end{equation*}
which forgets the choice of the line bundle. 
It can be turned into an unramified covering in the orbifold sense of degree $2g - 1$ by slightly modifying the structure of $\overline{\mathcal{M}}_{g,n}$, introducing an extra $\ZZ_r$ stabilizer for each node of each stable curve (see~\cite{JPPZ}).

%
%
%
We can then consider the family of Chern classes 
\[
\tilde{\Omega}_{g,n}^{[x]}(r,s;a_1,\dots,a_n)
	\coloneqq 
	 \sum_{i\geq0}x^{i}c_i\left(-R^*\pi_*\mathcal{L} \right),
\]
where $x$ is a formal parameter, and its pushed forward to the moduli spaces of stable curves 
\begin{equation*}
	\Omega_{g,n}^{[x]}(r,s;a_1,\dots,a_n)
	\coloneqq 
	\epsilon_{\ast}  
	\left(\tilde{\Omega}_{g,n}^{[x]}(r,s;a_1,\dots,a_n)\right)
	\in
	R^*(\overline{\mathcal{M}}_{g,n}).
\end{equation*}
We will omit the variable $x$ when $x = 1$. Notice that we recover Mumford's formula for the Hodge class when $r = s = 1$, $x=1$, and $a = (1,\dots,1)$.  The indices $a_1,\dots,a_n$ are often referred to as ``primary fields''.

\begin{definition} We define the following classes for $a = a_1 + \dots + a_n$.
\begin{align}
\text{\hhh}_{g,n}(a_1, \dots, a_n) &\coloneqq a^{1-g} \Omega^{[a]}_{g,n}(a, 0; - a_1, \dots, - a_n) 
\end{align}
\end{definition}

\subsection{$\Omega$-classes properties} \label{sec:Omega-basic} This section recalls several properties of the $\Omega$-classes collected in~\cite{BLRS,Euler,lpsz}. Fix $g,n \geq 0$ integers such that $2g - 2 + n > 0$. Let $r$ and $s$ be integers with $r$ positive. Let $a_1, \ldots, a_n$ be integers satisfying the modular constraint 
$$
a_1+a_2+\cdots+a_n \equiv (2g-2+n)s \pmod{r}.
$$ 
Let $x$ be a formal variable. Then $\Omega$-classes satisfy the following properties:

\subsubsection{Primary fields properties} \label{properties:shift-ai}
We have:
\begin{enumerate}
\item $\Ch^{[x]}_{g,n}(r, s; a_1, \dots, a_i + r, \dots, a_n) = \Ch^{[x]}_{g,n}(r,s;a_1, \dots, a_n) \cdot\left( 1 + x\frac{a_i}{r}\psi_i\right)$
\item $\Ch^{[x]}_{g,n}(r, s+r; a_1, \dots, a_n) = \Ch^{[x]}_{g,n}(r, s; a_1, \dots, a_n) \cdot \exp\left( \sum_{m=1} \frac{(-x)^m}{m}\left( \frac{s}{r} \right)^m \kappa_m \right)$
\item $\Ch^{[x]}_{g,n}(r,0;a_1, \dots, a_n) = \Ch^{[x]}_{g,n}(r,r;a_1, \dots, a_n)$
\item $\Ch^{[x]}_{g,n}(r,s; a_1, \dots, 0, \dots, a_n) = \Ch^{[x]}_{g,n}(r,s;a_1, \dots, r, \dots, a_n)$
\item Let $n\geq 1$. Then $\lambda_{g} \Omega^{[x]}_{g,n}\big(r,0; 0,\ldots,0 \big) = r^{2g-1}\lambda_{g} \left( 1 -x\lambda_1 + \dots + (-x)^g \lambda_g \right)$
\end{enumerate}

\subsubsection{CohFT-like properties}\label{properties:pull-back-range} Assume in addition that $0\leq a_1,\dots,a_n\leq r$. Then 
\begin{enumerate}
\item[(6)] $\Ch^{[x]}_{g,n}(r,s;a_1, \dots, a_n, s) = \pi^{\ast}\Ch^{[x]}_{g,n}(r,s;a_1, \dots, a_n)$
\end{enumerate}
where $\pi\colon \oM_{g,n+1}\to \oM_{g,n}$ forgets the last marked point. 

\subsubsection{Polynomiality properties}\label{properties:poly} 
\begin{enumerate}
\item[(7)] Let $r$ to be a sufficiently large natural number. Then 
$$
\left(\Omega_{g,n}^{[x]}(r,0; a_1, \dots, a_n)\right)_k \Big{|}_{\oM_{g,n}^{ct}} \in R^{k}(\oM^{ct}_{g,n})
$$ 
is a non-homogeneous polynomial in the $a_i$ of polynomial degree equal to $k$.
\end{enumerate} 

\subsubsection{Riemann-Roch vanishing}\label{properties:RR} 
\begin{enumerate}

\item[(8)] Let $a$ be a positive integer, let $b_1, \dots, b_n$ be integers such that their sum is positive and divisible by $a$, let $B$ be the quotient. Then we have the following bound on the cohomological degree:
$$
\deg \left(\Omega_{g,n}(a, 0; b_1, \dots, b_n)\right) \leq g + B - 1.
$$

\end{enumerate}

\section{The \texorpdfstring{$A=\bOmT$}{A=Omega} relation}\label{sec:AOmega}

The goal of this section is to recall a conjecture in~\cite{BLRS}. We have to introduce a notation for the trees and we follow the op. cit. verbatim.

\subsection{Basic notation for trees}
Let $\SRT_{g,n,m}$ be the set of stable rooted trees of total genus $g$, with $n$ regular legs $\sigma_1,\dots,\sigma_n$ and $m$ extra legs $\sigma_{n+1},\dots,\sigma_{n+m}$, which we refer to as ``frozen'' legs and must always be attached to the root vertex. For a $T\in \SRT_{g,n,m}$ we use the following notation:
\begin{itemize}
	\item $H(T)$ is the set of half-edges of $T$.
	\item $L(T),L_r(T),L_f(T)\subset H(T)$ are the sets of all, regular, and frozen legs of $T$, respectively. $L(T) = L_r(T)\sqcup L_f(T)$.
	\item $H_e(T)\coloneqq H(T)\setminus L(T)$.
	\item $\iota\colon H_e(T)\to H_e(T)$ is the involution that interchanges the half-edges that form an edge.
	\item $E(T)$ is the set of edges of $T$, $E\cong H_e(T)/\iota$.
	\item $H_+(T)\subset H(T)$ is the set of the so-called ``positive'' half-edges that consists of all regular legs of $T$ and of half-edges in $H(T)\setminus L(T)$ directed away from the root at the vertices where they are attached,
	$H_+(T)\cong E(T)\cup L_{r}(T)$; 
	\item $H_-(T)\subset H(T)$ is the set of the so-called ``negative'' half-edges that consists of all frozen legs of $T$ and of half-edges in $H(T)\setminus L(T)$ directed towards the root at the vertices where they are attached, $H_-(T)\cong E(T)\cup L_{f}(T)$;
	\item $V(T),V_{nr}(T)$ are the sets of vertices and non-root vertices of $T$. 
	\item $v_r\in V(T)$ is the root vertex of $T$; $V(T)=\{v_r(T)\}\sqcup V_{nr}(T)$.
	\item For a $v\in V(T)$, $H(v),H_+(v),H_-(v)$ are all, positive, and negative half-edges attached to $v$, respectively. Obviously, $|H_-(v_r)|=m$ and for any $v\in V_{nr}(T)$ we have $|H_-(v)|=1$.
	\item For a $v\in V(T)$ let $g(v)\in \ZZ_{\geq 0}$ be the genus assigned to $v$. The stability condition means that $2g(v)-2+|H(v)|>0$.
	The genus condition reads $\sum_{v\in V(T)} g(v) = g.$
	\item We say that a vertex or a (half-)edge $x$ is a descendant of a vertex or a (half-)edge $y$ if $y$ is on the unique path connecting $x$ to $v_r$. 
	\item For an $h\in H_+(T)$ let $DL(h)$ be the set of all legs that are descendants to $h$, including $h$ itself. Note that $DL(h)\subseteq L_r(T)$ for any $h\in H_+(T)$ and $DL(l)=\{l\}$ for $l\in L_r(T)$. 
	\item For an $e\in E(T)$ let $DL(e)$ be the set of all legs that are descendants to $e$. Note that $DL(e)\subseteq L_r(T)$ for any $e\in E(T)$. 
	\item For an $v\in V(T)$ let $DL(v)$ be the set of all regular legs that are descendants to $v$. In particular, $DL(v_r) = L_r(T)$. 
	\item For a $v\in V(T)$ let $DV(v)\subset V(T)$ be the subset of all vertices that are descendants of $v$, including $v$ itself. For instance, $DV(v_r) = V(T)$. 
\end{itemize}

\subsubsection{Ring of coefficients}
Consider the polynomial ring $Q\coloneqq \QQ[a_1,\dots,a_n]$ and define $a\colon H_+(T) \to Q$, $a\colon E(T)\to Q$, and $a\colon V(T)\to Q$ (abusing notation we use the same symbol $a$ for all of these maps) by 
\begin{align}
	a(\sigma_i)& \coloneqq a_i, &i=1,\dots,n; 
	& & a(h)& \coloneqq \textstyle\sum_{l\in DL(h)} a(l), & h\in H_+(T); \\
	a(e)& \coloneqq \textstyle\sum_{l\in DL(e)} a(l), & e\in E(T); 
	& & a(v)& \coloneqq \textstyle\sum_{l\in DL(v)} a(l), & v\in V(T).
\end{align} 


\subsubsection{Levels}
We enhance the structure of a stable rooted tree to the so-called leveled stable rooted tree (of genus $g$, with $n$ regular and $m$ frozen legs). Let $T\in\SRT_{g,n,m}$. A function $\ell\colon V(T)\to\ZZ_{\geq 0}$ is called a level function if the following conditions are satisfied:
\begin{itemize}
	\item The value of $\ell$ on the root vertex is zero ($\ell(v_r) = 0$).
	\item If $v'\in DV(v)$ and $v'\not=v$, then $\ell(v')>\ell(v)$. 
	\item There are no empty levels, that is, for any $0\leq i \leq \max \ell(V(T))$ the set $\ell^{-1}(i)$ is non-empty. 
\end{itemize}
 A leveled stable rooted tree is a pair $(T,\ell)$. The height of a leveled stable rooted tree $(T,\ell)$ is $\ell(T)\coloneqq \max \ell(V(T))$. Let $\mathcal{L}(T)$ denote the set of level functions on $T$. 

Once we fix a level function $\ell\in \mathcal{L}(T)$, we extend the definition of a genus function $g\colon V(T) \to \ZZ_{\geq 0}$ to the level genus function $g^{\mathrm{lvl}}\colon \{0,\dots,\ell(T)\}\to \ZZ_{\geq 0}$ defined as 
\begin{align}
	g^{\mathrm{lvl}} (l) \coloneqq \sum_{v\in V(T), \ell(v) \leq l} g(v). 
\end{align}
Of course, 	$g^{\mathrm{lvl}} (\ell(T))=g$. 

\subsubsection{Degree}
We enhance the structure of a stable rooted tree $T\in SRT_{g,n,m}$ with a so-called degree function $d\colon V(T) \to \ZZ_{\geq 0}$. 
Let $\mathcal{D}(T)$ denote the set of degree labels on $T$. 

In the presence of a level function $\ell\in \mathcal{L}(T)$, we associate with a degree function $d$  a level degree function $d^{\mathrm{lvl}}\colon \{0,\dots,\ell(T)\}\to \ZZ_{\geq 0}$, where 
\begin{align}
	d^{\mathrm{lvl}} (l) \coloneqq \sum_{v\in V(T), \ell(v) \leq l} d(v) + |\{v\in V(T), \ell(v) \leq l\}|-1. 
\end{align}
Define also 
$
d(T)\coloneqq d^{\mathrm{lvl}} (\ell((T)).
$

\subsection{\texorpdfstring{$A-\bOmT$}{A-Omega} vanishing relations} \label{sec:vanishing}

\begin{definition}
For any $g,n,m\geq 0$ such that $2g-2+n+m>0$ and for any $a_1,\dots,a_{n}$, $a\coloneqq \sum_{i=1}^{n} a_i$, let 
\begin{align}
	\bOm_{g,n}^{m}(a_1,\dots,a_{n}) &\coloneqq \left(\pi^{*}\right)^{m} \left(\lambda_g \hhh_{g,n}(a_1, \dots, a_n)\right)
	\\ \notag
	&= (a)^{1-g}\lambda_{g} \Omega^{[a]}_{g,n+m}\Big(a,0; -a_1,\dots,-a_{n}\underbrace{0,\ldots,0}_{m}\Big)
\end{align}
(the map $\pi$ here forgets the last marked point at each application, so the power $m$ forgets $m$ distinct points).
\end{definition}

Let $T\in\SRT_{g,n,m}$. Assign to each $v\in V(T)$ the moduli space of curves $\oM_{g(v),|H(v)|}$, where the first $|H_+(v)|$ marked points correspond to the positive half-edges attached to $v$ and ordered in an arbitrary but fixed way and the the last $|H_-(v)|$ marked points correspond to the negative half-edges attached to $v$, also ordered in some arbitrary but fixed way. Consider the class
\begin{align}
& \bOm(v)  \coloneqq \bOm^{|H_-(v)|}_{g(v),|H_+(v)|} (a(h_1),\dots,a(h_{|H_+(v)|}))
\in R^*(\oM_{g(v),|H(v)|})\otimes_{\QQ}Q.
\end{align}


%

\begin{definition}
	For each $(g,n,m)$ such that $2g-2+n+m>0$ define the class 
	$$\bOmT^m_{g,n}\in R^*(\oM_{g,n+m})\otimes_{\QQ}Q$$
	as
	\begin{equation}
		\bOmT^m_{g,n} \coloneqq \sum_{\substack{T\in\SRT(g,n,m), d\in \mathcal{D}(T), \ell\in \mathcal{L}(t)\\ \forall i < \ell(T)\colon d^{\mathrm{lvl}}(i)\leq 2g^{\mathrm{lvl}}(i)-2+m}} (-1)^{\ell(T)} \biggl(\prod_{e\in E(T)} a(e)\biggr) (b_T)_* \bigotimes_{v \in V(T)} (\bOm(v))_{d(v)}
	\end{equation}
	Here $(b_T)_*$ is the boundary pushforward map that acts from $\bigotimes_{v \in V(T)} R^*(\oM_{g(v),|H(v)|})\otimes_{\QQ} Q$ to $R^*(\oM_{g,n+m})\otimes_{\QQ}Q$. 
	The component of the class $\bOmT^m_{g,n}$ in $R^p$ is homogeneous polynomials of degree $p$ in $a_1,\dots,a_n$, $p=0,\dots,3g-3+n+m$. 
\end{definition}

Consider the moduli space 
\begin{align}
\overline{\mathcal{M}}_{g}^{\sim}(\mathbb{P}^1,a_1,\dots,a_n,-\sum_{i=1}^n a_i)	
\end{align}
of rubber stable maps to $(\mathbb{P}^1,0,\infty)$. Let $s\colon \oM_{g}^{\sim}(\mathbb{P}^1,a_1,\dots,a_n,-\sum_{i=1}^n a_i)\to \oM_{g,n+1}$ be the projection to the source curve, and $\lambda_g$ the lift of the lambda class with respect to this projection. Let $t\colon \oM_{g}^{\sim}(\mathbb{P}^1,a_1,\dots,a_n,-\sum_{i=1}^n a_i)\to LM_{2g-1+n}$ be the projection to the target curve, where $LM_{2g-1+n}$ denotes the Losev-Manin space with $2g-1+n$ marked points. Let $\tilde \psi_0$ be the pull-back by $t$ of the $\psi$-class at the point $0$ in Losev-Manin space. Define
\begin{align} \label{eq:NewA1}
	A^1_{g,n} \coloneqq s_* \left(\frac{\lambda_g}{1-\tilde\psi_0} \left[\oM_{g}\left(\mathbb{P}^1,a_1,\dots,a_n,-\sum\nolimits_{i=1}^n a_i\right)\right]^{\mathrm{vir}}\right)\in R^*(\oM_{g,n+1})\otimes_{\QQ}Q.
\end{align}
Note that $(A^1_{g,n})_{<2g}=0$ and $(A^1_{g,n})_{2g}=\lambda_g\DR_{g}(a_1,\dots,a_n,-\sum_{i=1}^n a_i)$. Note also that for any $d\geq 2g$ the class $(A^1_{g,n})_{d}$ is a homogeneous polynomial of degree $d$ in $a_1,\dots,a_n$.  

Also, let $A^0_{g,n}\coloneqq\frac{1}{\sum_{i=1}^n a_i} \pi_*A^1_{g,n}$. Note that $(A^0_{g,n})_{<2g-1}=0$ for any $d\geq 2g-1$ the class $(A^0_{g,n})_{d}$ is a homogeneous polynomial of degree $d$ in $a_1,\dots,a_n$.  

\begin{remark}
The definitions of the $A$-classes differ from the original ones in \cite{BGR19,BS22}, they are however equivalent due to \cite[Lemma 2.2]{BLS}.
\end{remark}

\begin{theorem}[Main Theorem]\label{thm:Omega-A} For $g,n,m\geq 0$, $2g-2+n+m>0$, we have 
\begin{equation}
\deg \left(\bOmT^m_{g,n} - A_{g,n}^{m}(\delta_{m,0} + \delta_{m,1}) \right) \leq 2g-2+m. 
\end{equation}
\end{theorem}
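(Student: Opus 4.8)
The plan is to prove the theorem by a two-step strategy mirroring the one announced in the introduction: (1) use virtual localization on the moduli of rubber stable maps to $\mathbb{P}^1$ to produce, for each $(g,n,m)$, a tautological identity expressing $A^m_{g,n}$ (for $m=0,1$) as a sum over fixed loci, which have the shape of a sum over certain leveled graphs decorated by $\Omega$-classes coming from the ``Chiodo formula'' (Mumford-type) for the $\lambda_g$-restricted Hodge integrals over the vertices; and (2) massage this localization sum, up to the claimed cohomological degree $2g-2+m$, into the combinatorial sum defining $\bOmT^m_{g,n}$. The degree truncation is essential: both classes $A^m_{g,n}$ and $\bOmT^m_{g,n}$ start in degree $2g-1+m$ or so (for $A$, $(A^1_{g,n})_{<2g}=0$ and $(A^0_{g,n})_{<2g-1}=0$ as recorded in the excerpt), so ``$\deg(\cdot)\leq 2g-2+m$'' means the two classes agree in \emph{all} degrees where either is potentially nonzero. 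So in fact the theorem is an equality of the geometrically meaningful parts.

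**First I would** set up the localization computation in Section~\ref{sec:virloc}: equip $\oM_g(\mathbb{P}^1,a_1,\dots,a_n,-\sum a_i)$ (rubber, relative to $0,\infty$) with the standard $\mathbb{C}^*$-action on $\mathbb{P}^1$, and apply the virtual localization formula to $s_*(\frac{\lambda_g}{1-\tilde\psi_0}[\cdots]^{\mathrm{vir}})$. The fixed loci are indexed by decorated bipartite-type graphs; but because we push forward by $s$ to the source and insert $\lambda_g$, Mumford's relation $\lambda_g^2=0$ and the vanishing $\lambda_g\cdot(\text{classes supported on non-compact-type boundary})=0$ collapse the relevant fixed loci to those that are honestly \emph{trees} of source components, with one ``root'' side and the rest distributed over levels according to the weights of the torus action --- precisely the leveled rooted trees $\SRT_{g,n,m}$. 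At each vertex the contribution of the (localized) virtual class, after multiplying by $\lambda_g$ and expanding the edge/node Euler-class denominators as geometric series (this is where $\frac{1}{1-\tilde\psi_0}$ and the $\frac{1}{w-\psi}$ edge factors produce all degrees, not just the top one), is exactly $\lambda_g$ times a $\Omega$-class with primary fields $-a(h_i)$ and parameters $(a,0)$ --- i.e.\ the class $\bOm(v)$ of the excerpt --- by Chiodo's formula together with property (5) of Section~\ref{properties:shift-ai} (which identifies $\lambda_g\Omega^{[x]}_{g,n}(r,0;0,\dots,0)$ with $r^{2g-1}\lambda_g(1-x\lambda_1+\cdots)$) and properties (1)--(4) to handle the shifts of the $a_i$ into the fundamental domain. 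The edge factors $a(e)$ and the sign $(-1)^{\ell(T)}$ emerge from the product of weights $w_e$ on the edges (with $w_e$ proportional to $a(e)$) and from the orientation/parity bookkeeping of the torus-fixed graph sum.

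**The combinatorial heart of the argument** is Section~\ref{sec:mainthm}: the raw localization output is a sum over \emph{all} leveled trees with \emph{all} degree distributions, whereas $\bOmT^m_{g,n}$ carries the constraint $d^{\mathrm{lvl}}(i)\le 2g^{\mathrm{lvl}}(i)-2+m$ for $i<\ell(T)$. I would show that, modulo cohomological degree $>2g-2+m$, every term violating this inequality either vanishes outright or cancels in pairs. The vanishing comes from the Riemann--Roch degree bound, property (8) of Section~\ref{properties:RR}: $\deg\Omega_{g,n}(a,0;b_1,\dots,b_n)\le g+B-1$ with $B=(\sum b_i)/a$; applied vertex by vertex and summed along a level, this caps $d^{\mathrm{lvl}}(i)$ by exactly $2g^{\mathrm{lvl}}(i)-2+m$ (the ``$-2+m$'' being the contribution of the $m$ frozen legs at the root through the iterated $\pi^*$ in the definition of $\bOm^m_{g,n}$), so terms exceeding the bound are zero. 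The cancellations are the genuinely delicate part: one groups leveled trees that differ by ``merging/splitting adjacent levels'' or by ``sliding a subtree between levels'', and checks that the alternating sign $(-1)^{\ell(T)}$ together with the matching of $\Omega$-class restrictions under the splitting/vertex axioms of Chiodo classes (properties (1)--(6), in particular the pullback property (6) and the genus-reduction behavior at separating nodes) makes the contributions telescope --- this is the same cancellation mechanism as in \cite{BLRS} and \cite{BLS}, and I expect it to be the main obstacle, requiring a careful inclusion-exclusion over the poset of level functions on a fixed underlying tree.

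**The remaining pieces are bookkeeping.** The cases $m=0$ and $m=1$ need the compatibility $A^0_{g,n}=\frac{1}{\sum a_i}\pi_*A^1_{g,n}$ and $\bOm^0_{g,n}=\frac{1}{\sum a_i}\pi_*\bOm^1_{g,n}$-type relations to reduce one to the other (via the forgetful-pushforward properties of both $A$- and $\Omega$-classes); the case $m\ge 2$ is vacuous on the $A$-side (the indicator $\delta_{m,0}+\delta_{m,1}$ kills it), so there I must show $\deg\bOmT^m_{g,n}\le 2g-2+m$ outright --- which again follows from property (8) applied at the root vertex, whose $\Omega$-class has $m$ zero primary fields contributing $B=0$ and hence degree $\le g-1$, propagated down the tree. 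Finally, the polynomiality in the $a_i$ asserted in the definition of $\bOmT^m_{g,n}$ matches the polynomiality of $(A^1_{g,n})_d$ and $(A^0_{g,n})_d$ recorded above, and property (7) guarantees the $a_i$-degree equals the cohomological degree on the compact-type locus, which is enough since in the truncation range $2g-2+m$ the relevant classes are (after the vanishing) supported on compact type. Assembling these, the difference $\bOmT^m_{g,n}-A^m_{g,n}(\delta_{m,0}+\delta_{m,1})$ has no components in degree $\le 2g-2+m$, which is the claim.
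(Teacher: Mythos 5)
Your plan reproduces the two slogans of the introduction (localization plus leveled-tree combinatorics), but the way you set up each step does not work, and the two gaps are exactly where the paper has to do real work. First, the localization space: you propose to localize on the rubber space $\oM_{g}^{\sim}(\mathbb{P}^1,a_1,\dots,a_n,-\sum a_i)$ with the standard torus action, but rubber maps are defined precisely by quotienting out that $\mathbb{C}^*$-action, and in any case torus localization on an ordinary $\mathbb{P}^1$ target produces Hodge-bundle contributions at the fixed vertices, never the classes $\Omega^{[a]}(a,0;-a_1,\dots,-a_n)$. The paper instead localizes on stable maps to the orbifold/relative pair $(\mathbb{P}^1[a],0)$ with a $B\mathbb{Z}_a$ point at $\infty$; it is the orbifold structure that forces the unique fixed vertex over $\infty$ to be a moduli space of $a$-spin structures, whose obstruction bundle contributes Chiodo's class $\tilde\Omega^{[a/u]}(a,0;\dots)$ and hence, after pushforward, the class $\bOm$. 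Without the orbifold target there is no source for the $\Omega$-classes at all. Second, even with the correct space, the localization output is not the leveled-tree sum $\bOmT^m_{g,n}$: the fixed graphs have a single root vertex over $\infty$, and after multiplying by $\lambda_g$ only height-one trees survive, with $\bOm$ at the root and DR-cycles-with-$\psi$ factors (the classes $\bD$) at the vertices over $0$. Localization therefore proves only the degree bound for the auxiliary class $\bUp^m_{g,n}$ of \eqref{eq:Definition-Upsilon} (Theorem~\ref{thm:localization}). Turning that into a statement about $\bOmT^m_{g,n}-\delta_{m,1}A^1_{g,n}$ requires the separate, purely combinatorial identity $\Sigma^m_{g,n}=0$ (Theorem~\ref{thm:linear-relation}) — an exact sign cancellation over leveled trees, with the exceptional $m=1$ terms involving $A^1$ and $\bD$ handled by the Losev--Manin identity $X=0$ — followed by an induction over vertices (Remarks~\ref{rem:sigma-tilde} and~\ref{rem:dimension-Ups}). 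Your step (2) gestures at a telescoping cancellation inside the localization sum, but there is no mechanism in your setup that generates $\Omega$-classes at non-root levels, so no amount of ``massaging modulo degree $2g-2+m$'' can convert the localization output into $\bOmT$.

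A further concrete error: you dispose of the $m\geq 2$ case ``outright'' via the Riemann--Roch bound (property (8)) applied at the root vertex. That bound requires the primary fields to have positive sum divisible by $a$, whereas the root class $\bOm^m_{g,n}$ has primary fields $(-a_1,\dots,-a_n,0,\dots,0)$ summing to $-a$; and, more fundamentally, the individual leveled-tree terms of $\bOmT^m_{g,n}$ do \emph{not} vanish in degrees above $2g-2+m$ — the theorem records a cancellation among them, which the paper proves for all $m\geq 1$ by the same $\bUp$-vanishing-plus-induction scheme (if a term-by-term bound sufficed, the localization would be unnecessary for $m\geq 2$). The correct reductions among the cases are: $m=0$ follows from $m=1$ by pushforward, and $m\geq 2$ is genuinely nontrivial, not vacuous bookkeeping.
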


The degree here can be either cohomological degree or the degree of a polynomial in $a_1,\dots,a_n$. To be specific, let's use the degree in $a_i$'s. Note that the statement of the theorem for $m=0$ follows from the statement for $m=1$ by push-forward. 

\section{Virtual localisation}\label{sec:virloc}

\subsection{A relation from the virtual localization}

For any $g\geq 0, n\geq 1$ such that $2g-2+n \geq 0$ and for any $a_1,\dots, a_{n}$, $a \coloneqq\sum_{i=1}^{n} a_i$, let 
\begin{align}
	\bD_{g,n+1}(a_1,\dots,a_{n})& \coloneqq -\frac{\lambda_{g}\DR_{g}\big(a_1,\dots,a_{n},-a\big)}{(1+a\psi_{n+1})} 
\end{align}
If $T\in \SRT_{g,n,m}$, $v\in V_{nr}(T)$, then
\begin{align}
	\bD(v)& \coloneqq \bD_{g(v),|H(v)|}(a(h_1),\dots,a(h_{|H_+(v)|}))
	\in R^*(\oM_{g(v),|H(v)|})\otimes_{\QQ}Q. 
\end{align}

For $m\geq1$, we consider the following class: 
\begin{align} \label{eq:Definition-Upsilon}
	\bUp^m_{g,n}(a_1,\dots,a_n) & \coloneqq \delta_{m,1} \bD_{g,n+1}(a_1,\dots,a_{n}) + \bOm_{g,n}^{m}(a_1,\dots,a_{n})  
	\\ \notag & \quad 
	+ \sum_{\substack{T\in\SRT(g,n,m)\\ \ell\in \mathcal{L}(t),\, \ell(T)=1}} \biggl(\prod_{e\in E(T)} a(e)\biggr) (b_T)_* \bigg( \bOm(v_r)\otimes \bigotimes_{v \in V_{nr}(T)} \bD(v) \bigg).
\end{align}

\begin{theorem} 
\label{thm:localization} 
We have $\deg(\bUp^m_{g,n})_d\leq 2g-2+m$. 
\end{theorem}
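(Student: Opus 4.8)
The plan is to derive the relation, and with it the degree bound, from $\mathbb{C}^\ast$-virtual localization applied to the moduli space of rubber stable maps that already appears in the definition of $A^1_{g,n}$. Concretely, I would work on $\oM^\sim_g(\mathbb{P}^1;a_1,\dots,a_n,-a)$ (for $m\ge 2$, one runs the same computation after adding $m-1$ further marked points constrained to lie over $0$, so that the source morphism $s$ lands in $\oM_{g,n+m}$; the cases $m=0,1$ are the substantive ones), lift the fibrewise $\mathbb{C}^\ast$-action on the rubber target to the universal curve and to the universal root $\mathcal{L}$, and apply the Graber--Pandharipande virtual localization formula to the $s$-pushforward of $\tfrac{\lambda_g}{1-\tilde\psi_0}$ capped with $[\oM^\sim_g]^{\mathrm{vir}}$.

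\textbf{Fixed loci.} A torus-fixed rubber map splits its source into a subcurve $C_0$ contracted over $0$, a subcurve $C_\infty$ contracted over $\infty$, and chains of multiple covers over the bubble components of the rubber target in between; I would encode this by a rooted graph whose root vertex records $C_0$, whose edges record the intervening nodes and carry the forced covering multiplicities, and whose deeper vertices record the genus-bearing subcurves further from $0$. The balancing condition at each node forces the multiplicity on an edge to be exactly the weight $a(e)$ of the $A=\bOmT$ combinatorics of Section~\ref{sec:AOmega}. Two features then collapse the fixed-point sum to the shape of $\bUp^m_{g,n}$. First, the factor $\lambda_g$ annihilates any tautological class supported off the compact-type locus, so only fixed loci whose dual graph is a tree survive, i.e. leveled stable rooted trees. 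Second, the insertion $\tfrac{1}{1-\tilde\psi_0}$ resums the contribution of the length of the bubble chain over $0$; the resulting geometric series telescopes, collapsing all levels below the top, so that the root vertex $v_r$ ends up carrying the $\epsilon$-pushforward of $c(-R^*\pi_*\mathcal{L})$ for the $a$-th root $\mathcal{L}$ of $\mathcal{O}(\sum_i a_i p_i)$ prescribed by the ramification over $0$, which by Mumford's and Chiodo's Grothendieck--Riemann--Roch is exactly $\bOm(v_r)$ with $r=a$, $s=0$ and primary fields $-a_1,\dots,-a_n,0,\dots,0$, while each level-one vertex $v$ carries the node-deformation Euler factor $\tfrac{1}{1+a(e)\psi}$ against $\lambda_{g(v)}\DR_{g(v)}$, i.e. precisely $\bD(v)$, the node-smoothing weights assembling into $\prod_{e\in E(T)}a(e)$. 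The two extreme fixed loci --- the entire source over $0$, resp. over $\infty$ --- supply the isolated summands $\bOm^m_{g,n}$ and $\delta_{m,1}\bD_{g,n+1}$. Adding everything up, the fixed-point side of the localization formula is exactly $\bUp^m_{g,n}$.

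\textbf{Degree bound.} The left-hand side of the same localization formula computes the $s$-pushforward of $\lambda_g$ capped with $[\oM^\sim_g]^{\mathrm{vir}}$ (of virtual dimension $2g-2+n+m$) capped with the resummation $\tfrac{1}{1-\tilde\psi_0}$. Tracking codimensions through $s_*$ and the forgetful maps implicit for $m\ge 2$ shows that every graph contribution to $(\bUp^m_{g,n})_d$ with $d>2g-2+m$ is the image of a cycle of negative virtual dimension, hence vanishes --- equivalently, in the ``degree in the $a_i$'' bookkeeping, the unbounded-degree tails of the individual summands $\bD(v)$ and $\tfrac{1}{1+a(e)\psi}$ are forced by the localization identity to cancel in the sum. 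This is the asserted bound $\deg(\bUp^m_{g,n})_d\le 2g-2+m$. For $m\ge2$, alternatively, one checks directly that the extra over-$0$ points can be forgotten, using the CohFT-like pullback property~(6) of the $\Omega$-class together with the behaviour of the tree sum under adding a frozen leg.

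\textbf{Main obstacle.} The crux is the fixed-point bookkeeping of the second paragraph: verifying that the $\tfrac{1}{1-\tilde\psi_0}$ insertion produces exactly the telescoping that erases every level below the top, and that the $0$-vertex contribution reassembles, via Grothendieck--Riemann--Roch for the induced $a$-th root line bundle, into the $\Omega$-class with precisely the parameters entering $\bOm(v_r)$ --- with all orbifold automorphism factors (the $\mathbb{Z}_r$ stabilisers at the nodes), signs, and $\prod_e a(e)$ weights matching the $\bOmT$-combinatorics. Once the fixed-point formula is in this normalised form, the degree estimate is a dimension count.
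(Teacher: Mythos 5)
Your proposal has a genuine gap at its foundation: the space you want to localize on, $\oM_{g}^{\sim}(\mathbb{P}^1,a_1,\dots,a_n,-a)$, is a \emph{rubber} space, i.e.\ the fibrewise $\mathbb{C}^*$-scaling of the target is exactly what has been quotiented out in its definition, so there is no torus action left to which Graber--Pandharipande can be applied, and no ``lift to the universal root $\mathcal{L}$'' is available because no universal $a$-th root exists in this setup at all. The paper instead localizes on the moduli space $\overline{\mathcal{M}}_{g,m}\left(\mathbb{P}^{1}\left[a\right],a_{1},\dots,a_{n}\right)$ of stable maps relative to $0$ to the \emph{orbifold} target $\mathbb{P}^1[a]$ with a $B\mathbb{Z}_a$-point at $\infty$; this space does carry a $\mathbb{C}^*$-action, the rubber spaces only appear inside the fixed loci over $0$, and their $\tilde\psi_\infty$-contributions are converted into the $\bD$-classes by the BSSZ-type Lemma on $\tilde\psi_\infty$ (Lemma~\ref{lem:Psi infinity}). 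Correspondingly, the $\Omega$-class does not arise at the $0$-side vertex by ``Grothendieck--Riemann--Roch for the induced $a$-th root of $\mathcal{O}(\sum a_i p_i)$'' — there is no such canonical root on a contracted subcurve of a map to an ordinary $\mathbb{P}^1$ — but at the unique vertex over the orbifold point $\infty$, whose fixed locus is a moduli space of $a$-spin structures and whose normal-bundle contribution is $\tilde{\Omega}^{[a/u]}(a,0;-d_{e_1},\dots)$ by Chiodo's formula as in \cite{JPT11,JPPZ}. This also inverts your tree picture: the root of $\bUp^m_{g,n}$ (carrying $\bOm(v_r)$ and the frozen legs) is the $\infty$-vertex, and the frozen legs are pinned there by inserting $\prod_{i=n+1}^{n+m}\mathrm{ev}_i^*([\infty])$, not by adding marked points over $0$ as you suggest. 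There is likewise no ``telescoping'' of $\tfrac{1}{1-\tilde\psi_0}$ that erases levels; the insertion $\lambda_g/(1-\tilde\psi_0)$ is the definition of $A^1_{g,n}$ and plays no role in Theorem~\ref{thm:localization} (it only enters the separate linear relation of Theorem~\ref{thm:linear-relation}).

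The degree bound is also obtained by a different and more robust mechanism than the one you sketch. It is not a count of virtual dimensions of individual fixed loci (your ``negative virtual dimension'' step is not substantiated, and a localization identity by itself only equates two expressions, it does not make one of them vanish). In the paper, after capping with $\prod_i\mathrm{ev}_i^*([\infty])$ and $\lambda_g$ and pushing forward by $s$, the fixed-locus side organizes into $(-1)^m a^{m+1} u^{2g-2+m}\sum_{d\ge 0}(\bUp^m_{g,n})_d\,u^{-d}$ (plus the matching DR-type term for $m=1$), while the left-hand side is an honest equivariant class and therefore contains no negative powers of $u$; extracting the coefficient of $u^{-d'}$ for $d'>0$ kills precisely the components $(\bUp^m_{g,n})_d$ with $d>2g-2+m$, using that cohomological degree equals polynomial degree in the $a_i$ for each term. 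To repair your argument you would need to replace the rubber target by the orbifold-relative target (or otherwise rigidify), identify the $\infty$-vertex contribution with the $\tilde\Omega$-class, and then run exactly this ``no negative powers of $u$'' extraction.
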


The rest of this section is devoted to the proof of this theorem. 

\subsection{Stable relative maps to orbifold target}

Let $g$ and $m$ be two nonnegative integers. Let $a_{1},\dots,a_{n}$ be positive integers and denote by $a$ their sum. Let $\mathbb{P}^{1}\left[a\right]$ be the projective line with a single orbifold point $B\mathbb{Z}_{a}$ at $\infty\in\mathbb{P}^{1}$. Let
\[
\overline{\mathcal{M}}_{g,m}\left(\mathbb{P}^{1}\left[a\right],a_{1},\dots,a_{n}\right)
\]
be the moduli space stable maps to the orbifold/relative pair $\left(\mathbb{P}^{1}\left[a\right],0\right)$. This moduli space parametrizes maps
\[
f:C\rightarrow P
\]
where the source $C$ is a twisted curve of genus $g$, the target $P$ is a glueing of a chain of $l\geq0$ copies of $\mathbb{P}^{1}$ to $0\in\mathbb{P}^{1}\left[a\right]$, the map $f$ is a twisted stable map and the ramification profile over $0$ is given by the partition $\left(a_{1},\dots,a_{n}\right)$, in particular $f$ is of degree $a$. The $n$ ramifications points are marked from $1$ to $n$ and there are $m$ additional marked points from $n+1$ to $n+m$. In this setting, all the orbifold points of the twisted curve $C$ appear at the nodes in the preimage of the orbifold point at $\infty$ by $f$. The restriction of $f$ to the chain of $\mathbb{P}^{1}$ belongs to a moduli space of rubber stable maps and is called the rubber part of $f$. We refer to \cite{AGV08, JPT11} for a detailed definition of the moduli space of twisted stable maps to $\left(\mathbb{P}^{1}\left[a\right],0\right)$. This moduli space has a virtual fundamental class of virtual dimension 
\[
\textrm{vdim} \left[\overline{\mathcal{M}}_{g,m}\left(\mathbb{P}^{1}\left[a\right],a_{1},\dots,a_{n}\right)\right]^{{\rm vir}}=2g-1+n+m.
\]

\subsection{$\mathbb{C}^{*}$-action and fixed loci}

We consider the $\mathbb{C}^{*}$-action on $\mathbb{P}^{1}$ given by
\[
t\cdot\left[z_{0}:z_{1}\right]=\left[z_{0}:tz_{1}\right]
\]
which lifts to $\mathbb{P}^{1}\left[a\right]$ and to $\overline{\mathcal{M}}_{g,m}\left(\mathbb{P}^{1}\left[a\right],a_{1},\dots,a_{n}\right)$.

Each $\mathbb{C}^{*}$-fixed locus of $\overline{\mathcal{M}}_{g,m}\left(\mathbb{P}^{1}\left[a\right],a_{1},\dots,a_{n}\right)$ is labelled by a bipartite graph $\Phi$ such that each vertex $v\in V\left(\Phi\right)$ is decorated by a genus $g\left(v\right)$ and is labelled by $0\in\mathbb{P}^{1}\left[a\right]$ or $\infty\in\mathbb{P}^{1}\left[a\right]$, we denote by $V_{0}\left(\Phi\right)$ the set of vertices over $0$ and $V_{\infty}\left(\Phi\right)$ the vertices over $\infty$. In addition, each edge $e\in E\left(\Phi\right)$ is decorated with a nonzero degree $d_{e}$. Finally, there are $n+m$ legs: the $n$ first legs are called regular legs, and the $m$ remaining legs are called frozen legs. 

We associate to a fixed map $f:C\rightarrow P$ a bipartite graph $\Phi$ such that: 
\begin{itemize}
\item each degree $d_{e}$ covering of $\mathbb{P}^{1}\left[a\right]$ by a genus $0$ curve (with possibly an orbifold point $B\mathbb{Z}_a$ over $\infty$) gives an edge decorated by $d_{e}$, 
\item each connected component in the preimage of $\infty$ gives a vertex $v$ labelled by $\infty$, 
 it can be either:
	\begin{itemize}
	\item a twisted curve, in that case $g(v)$ is the genus of that twisted curve,
	\item an orbifold point appearing at the node of two genus $0$ covers of $\mathbb{P}^{1}\left[a\right]$, in that case $g(v)=0$,
	\item a smooth point on a genus $0$ cover of $\mathbb{P}^{1}\left[a\right]$, in that case $g(v)=0$,
	\end{itemize}
\item if the target expands, that is if there is at least one $\mathbb{P}^1$ glued to $\mathbb{P}^1[a]$ in the target of $f$, each disconnected component of the source of the rubber map gives a vertex $v$ over $0$, and $g(v)$ denotes its genus,
\item if the target does not expand, each of the $n$ points in the ramification profile over $0$ gives a vertex over $0$ of genus $0$,
\item the $n$ ramification points give the $n$ regular legs, and the $m$ marked points give the $m$ frozen legs.
\end{itemize}
A vertex $v$ is stable if $$2g(v)-2+\left|E_{v}\right|+m(v)+n(v)>0,$$ where $|E_v|$ is the cardinal of the set of edges attached to $v$,  $n(v)$ denotes the number of regular legs attached to $v$ and $m(v)$ denotes the number of frozen legs attached to $v$.

Conversely, the maps of the fixed locus $\Phi$ are obtained by glueing the maps associated to the vertices over $0$ and $\infty$ with the degree $d_e$ coverings. More precisely, we describe the fixed locus $\Phi$ in the following way. We denote by $V^{st}_{\infty}\left(\Phi\right)$ the stable vertices over $\infty$. Let 
\[
\overline{\mathcal{M}}_{\Phi}=\begin{cases}
\left(\prod_{v\in V_{0}\left(\Phi\right)}\overline{\mathcal{M}}_{v}\right)^{\sim}\times\prod_{v\in V^{st}_{\infty}\left(\Phi\right)}\overline{\mathcal{M}}_{v}^{a} & {\rm if\,the\,target\,expands}\\
\prod_{v\in V^{st}_{\infty}\left(\Phi\right)}\overline{\mathcal{M}}_{v}^{\frac{1}{a}} & {\rm if\,the\,target\,does\,not\,expand}
\end{cases}
\]
where
\begin{itemize}
\item $\overline{\mathcal{M}}_{v}^{a}$, for $v\in V^{st}_{\infty}\left(\Phi\right)$, stands for the moduli space of $a$-spin structures
\[
\overline{\mathcal{M}}_{g\left(v\right)}^{a,0}\Big(a-d_{e_{1}},\dots,a-d_{e_{\left|E_{v}\right|}},\underset{m\left(v\right)}{\underbrace{0,\dots,0}}\Big)
\]
 where $e_{1},\dots,e_{\left|E_{v}\right|}$ are the indices of the edges attached to $v$,
\item $\overline{\mathcal{M}}_{v}$, for $v\in V_{0}\left(\Phi\right)$, stands for the moduli space
\[
\overline{\mathcal{M}}_{g\left(v\right),m\left(v\right)}\left(a_{i_{1}^{v}},\dots,a_{i_{\left|L_{r}^v\right|}^{v}},-d_{e_{1}},\dots,-d_{e_{\left|E_{v}\right|}}\right),
\]
of relative (not rubber) maps to the pair $\left(\mathbb{P}^{1},0\cup\infty\right)$, where $L_{r}^v=\left\{ i_{1}^{v},\dots,i_{\left|L_{r}^{v}\right|}^{v}\right\} \subseteq\left\{ 1,\dots,n\right\} $ is the set of indices of the regular legs attached to $v$,
\item the tilde indicates that two products of maps are identified if they differ by an element of $\mathbb{C}^{*}$ in each $\mathbb{P}^{1}$ of the target.
\end{itemize}
We recall that the moduli of spin structures $\overline{\mathcal{M}}^{a,s=0}_{g}(a_1,\dots,a_n)$ is also the moduli space of twisted stable maps to the orbifold point $B\mathbb{Z}_a $ such that the source curve has, in addition to the orbifold points at the nodes, $n$ orbifold points with orbifold structure determined by $a_1,\dots,a_n$. Thus, there is a map
\[
\iota:\overline{\mathcal{M}}_{\Phi}\rightarrow\overline{\mathcal{M}}_{g,m}\left(\mathbb{P}^{1}\left[a\right],a_{1},\dots,a_{n}\right)
\]
glueing the different contributions of $\Phi$,  the automorphism group of $\iota_{*}\left(\overline{\mathcal{M}}_{\Phi}\right)$ is of order
\[
\left|{\rm Aut}\left(\Phi\right)\right|\prod_{e\in E\left(\Phi\right)}d_{e}, 
\]
and the class of the fixed locus associated to $\Phi$ is $\frac{\iota_{*}\left(\left[\overline{\mathcal{M}}_{\Phi}\right]^{{\rm vir}}\right)}{\left|{\rm Aut}\left(\Phi\right)\right|\prod_{e\in E\left(\Phi\right)}d_{e}}$. 
\begin{lemma}
	The graph $\Phi$ of a fixed locus has a unique (stable or unstable) vertex over $\infty\in\mathbb{P}^{1}\left[a\right]$.
\end{lemma}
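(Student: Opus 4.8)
The plan is to analyze the combinatorics of a bipartite fixed-locus graph $\Phi$ coming from a $\mathbb{C}^*$-fixed stable map $f\colon C\to P$ to $(\mathbb{P}^1[a],0)$, and to show that the edge-degree balance forces exactly one vertex over $\infty$. First I would recall the structure of $\Phi$: every edge $e$ corresponds to a genus-$0$ multiple cover of $\mathbb{P}^1[a]$ of degree $d_e>0$, fully ramified over $0$ and over $\infty$ (possibly with a $B\mathbb{Z}_a$-orbifold point over $\infty$), so each edge has exactly one half-edge at a vertex in $V_0(\Phi)$ and one at a vertex in $V_\infty(\Phi)$; the legs are irrelevant to the count since the $n$ regular legs sit over $0$ (the ramification profile there) and the $m$ frozen legs are free. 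The key numerical input is that the total degree of $f$ is $a = \sum_i a_i$, computed over a generic point of $\mathbb{P}^1[a]$, and simultaneously the degree over $0$ is prescribed by the partition $(a_1,\dots,a_n)$: summing the local degrees at the preimages of $0$ gives $\sum_{\text{edges at }V_0} d_e = a$.

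The heart of the argument is the following: I would show that a vertex over $\infty$ is forced to be a single vertex because of how the fixed map looks near $\infty$. In a $\mathbb{C}^*$-fixed map the source curve decomposes into components mapping entirely to $0$, components mapping entirely to $\infty$ (or to the expanded rubber $\mathbb{P}^1$'s, which in the fixed-locus combinatorics are recorded on the $V_0$-side), and rational bridges mapping with some degree $d_e$; the components over $\infty$ are glued to the edges precisely at the ramification points over $\infty$. Since $\mathbb{P}^1[a]$ has a \emph{single} orbifold point $B\mathbb{Z}_a$ sitting over $\infty$, and the twisted structure of the source curve is (by the very setup recalled above) concentrated at the nodes lying in the preimage of $\infty$, the entire preimage of $\infty$ in the twisted curve must collapse to the one orbifold locus: concretely, all the edges meeting $V_\infty(\Phi)$ must be incident to a single connected piece of the fiber over $\infty$, because the orbifold target point is unique and the gluing of a $B\mathbb{Z}_a$-twisted cover forces all the corresponding ramification points over $\infty$ to lie on one twisted component (or at one orbifold node). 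Hence $|V_\infty(\Phi)|=1$, regardless of whether that vertex is stable or unstable.

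Concretely, the cleanest way to package this: I would argue by contradiction. Suppose $\Phi$ had two vertices $v,v'$ over $\infty$. Since $\Phi$ is connected, and it is bipartite with all edges running between $V_0$ and $V_\infty$, the $0$-side would have to connect $v$ and $v'$; but on the source curve this would mean the fiber over $\infty$ has two distinct connected components joined through components lying over $0$ — impossible, because the fiber of a map over a single point $\infty\in\mathbb{P}^1[a]$ is what it is, and the twisted/orbifold structure (living only over $\infty$ by construction) cannot be split into two components separated by $0$-components without the map revisiting $\infty$, contradicting that $0$-components map to the point $0$. Therefore the fiber over $\infty$ is connected, giving a single vertex $v_\infty$. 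I would also note that this vertex carries all the edges, so $\sum_{e\in E(\Phi)} d_e = a$ with all edges incident to $v_\infty$, which is consistent and will be used downstream in the localization computation.

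The main obstacle I anticipate is making the twisted-curve bookkeeping over $\infty$ fully rigorous: one must be careful that the orbifold nodes of $C$ really do all lie over $\infty$ (this is asserted in the setup, citing \cite{AGV08,JPT11}), and that an "unstable vertex over $\infty$" — a smooth point or a single orbifold node between two edges — is still counted as \emph{the} unique vertex rather than being suppressed. So after the connectedness argument I would add a short remark handling the two degenerate shapes of $v_\infty$ (a genuine twisted component; a lone $B\mathbb{Z}_a$-node between two rational bridges) uniformly, confirming in each case that there is precisely one such vertex and that the edge count $\sum_e d_e = a$ is respected.
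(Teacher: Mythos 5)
There is a genuine gap. The heart of your argument is the claim that the preimage of $\infty$ in the source curve must be connected ``because the orbifold target point is unique,'' and hence that two vertices over $\infty$ are impossible. This is not a valid principle: the fiber of a map over a single point is typically disconnected (a point over which the map is unramified has $\deg f$ preimages), and nothing in the twisted-curve setup forces connectedness of $f^{-1}(\infty)$. Indeed, in the analogous localization on $\mathbb{P}^1$ with no orbifold structure at $\infty$, or with a stacky point $B\mathbb{Z}_r$ with $r$ strictly smaller than the total degree, fixed-locus graphs with several vertices over $\infty$ do occur; so the uniqueness cannot be a consequence of connectedness or of the mere presence of a single orbifold point. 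Your contradiction step (``the map cannot revisit $\infty$'') does not rule out two disjoint pieces of the fiber over $\infty$ attached to different edges, which is exactly the configuration one must exclude.

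What actually forces uniqueness is a numerical monodromy/degree argument, which your sketch gestures at (``edge-degree balance'') but never carries out. One takes a single vertex $v$ over $\infty$ and checks in each of the three local shapes that the edge degrees incident to $v$ already sum to the full degree $a$: if $v$ is a smooth point on a genus $0$ cover, representability (orbifold points of $C$ only at nodes over $\infty$) forces $d_e\equiv 0 \pmod a$, hence $d_e=a$; if $v$ is an orbifold node between two edges, the balancing of monodromies at the node gives $d_1+d_2\equiv 0\pmod a$, hence $d_1+d_2=a$; if $v$ is a stable vertex, the modular constraint of $\overline{\mathcal{M}}^{a,0}_{g(v)}(a-d_{e_1},\dots,a-d_{e_{|E_v|}},0,\dots,0)$ gives $\sum_i d_{e_i}\equiv 0\pmod a$, hence $\sum_i d_{e_i}=a$. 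Since the total degree over a generic point is exactly $a$ and every edge degree is positive, all edges are incident to $v$, and since every vertex over $\infty$ carries at least one edge, $v$ is the unique such vertex. The fact that the orbifold order equals the total degree $a$ (not just any $r$) is essential here, and it is precisely the ingredient your connectedness argument never uses.
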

\begin{definition}
	This unique vertex is called the \emph{root} of the graph.
\end{definition}

\begin{proof}
Let $v\in V_{\infty}\left(\Phi\right)$, there are three possibilities.

\begin{enumerate}
	\item  If $v$ is an unstable vertex with one incoming edge and possibly one frozen leg, then $v$ corresponds to a smooth point on a genus $0$ component of the twisted curve $C$, and this genus $0$ component is a cover of a degree $d_{e}$ of $\mathbb{P}^{1}\left[a\right]$. Now, an orbifold cover $\mathbb{P}^{1}\rightarrow\mathbb{P}^{1}\left[a\right]$ of degree $d_{e}$ only exists if $d_{e}$ is a nonzero multiple of $a$, and since this degree $d_{e}$ is lower or equal to the total degree $a$ we get $d_{e}=a$. Thus, there is no other edge and no other vertex above $\infty$. 
	\item If $v$ is an unstable vertex with two incoming edges, one of degree $d_{1}$ and the other of degree $d_{2}$, then $v$ corresponds to an orbifold node with stabilizer $\mathbb{Z}_a$ and the local picture of a twisted curve at a node imposes that $d_{2}=a-d_{1}$. Thus, there are no other edge and in particular no other vertex above $\infty$.
	\item If $v$ is a stable vertex with $\left|E_{v}\right|$ incoming edges of degrees $d_{1},\dots,d_{\left|E_{v}\right|}$ with possibly some frozen legs, then the modular condition of the moduli space of spin structures $\overline{\mathcal{M}}_{v}^{a}$ is
\[
\left(a-d_{1}\right)+\cdots+\left(a-d_{\left|E_{v}\right|}\right)\equiv0\quad\textrm{mod}\;a.
\]
Since the total sum of the degrees equals $a$, this condition is satisfied only if each edge is adjacent to $v$, in particular there is no other vertex above $\infty$. 
\end{enumerate}
\end{proof}

\subsection{The virtual localization formula}

Let $u=c_{1}\left(\mathcal{O}_{\mathbb{P}^{\infty}}\left(-1\right)\right)$ be a generator of the equivariant cohomology of the point, that is,
\[
H_{\mathbb{C}^{*}}^{*}\left(pt\right)=\mathbb{Z}\left[u\right].
\]
The virtual localization formula \cite{GP99, GV05} gives the following equality in 
$$
H^{*}\left(\overline{\mathcal{M}}_{g,m}\left(\mathbb{P}^{1}\left[a\right],a_{1},\dots,a_{n}\right)\right)\otimes\mathbb{Q}[u,u^{-1}]
$$
by expressing the virtual fundamental class as the sum of the fixed weighted loci:
\begin{equation}
\left[\overline{\mathcal{M}}_{g,m}\left(\mathbb{P}^{1}\left[a\right],a_{1},\dots,a_{n}\right)\right]^{{\rm vir}}=\sum_{\Phi}\frac{1}{\left|{\rm Aut}\Phi\right|}\frac{1}{\prod d_{e}}\iota_{*}\left(\frac{\left[\overline{\mathcal{M}}_{\Phi}\right]^{{\rm vir}}}{e_{\mathbb{C}^{*}}\left(\mathcal{N}_{\Phi}^{{\rm vir}}\right)}\right),\label{eq: loca formula}
\end{equation}
where $\mathcal{N}_{\Phi}^{{\rm vir}}$ is the virtual normal bundle of the fixed locus $\Phi$ in $\overline{\mathcal{M}}_{g,m}\left(\mathbb{P}^{1}\left[a\right],a_{1},\dots,a_{n}\right)$ and we denote by $e_{\mathbb{C}^{*}}\left(\mathcal{N}_{\Phi}^{{\rm vir}}\right)$ its equivariant Euler class. The term $\iota_{*}\left(\left[\overline{\mathcal{M}}_{\Phi}\right]^{{\rm vir}} / e_{\mathbb{C}^{*}}\left(\mathcal{N}_{\Phi}^{{\rm vir}}\right)\right)$ can be computed (for more details see \cite{JPT11,JPPZ}) in the following way.
\begin{itemize}
\item If the target does not expand it contributes a factor of $1$, otherwise it contributes the factor
\[
-\frac{\prod_{e\in E\left(\Phi\right)}d_{e}}{u+\tilde{\psi}_{\infty}}\left[\left(\prod_{v\in V_{0}\left(\Phi\right)}\overline{\mathcal{M}}_{v}\right)^{\sim}\right]^{{\rm vir}}
\]
where the class $\tilde{\psi}_{\infty}$ is pull-back by the forgetful map keeping the target curve of the $\psi$-class at the point $\infty$ in the Losev-Manin space.
\item The vertex over $\infty$ contributes in the following way:
\begin{itemize}
\item if the vertex is stable, as
\[
\prod_{e\in E\left(\Phi\right)}d_{e}\times\left(\frac{a}{u}\right)^{2-g}\tilde{\Omega}_{g\left(v\right), |E(v)| + m(v)}^{\left[\frac{a}{u}\right]}\Big(a,0;-d_{e_{1}},\dots,-d_{e_{\left|E_{v}\right|}},\underset{m\left(v\right)}{\underbrace{0,\dots,0}}\Big),
\]
where the class $\tilde{\Omega}$, defined in Section~\ref{sec:Omega:classes}, is such that its push forward by the forgetful map from the moduli space of spin structures to the moduli space of curves gives the $\Omega$-class,
\item if the vertex is unstable and has one incoming edge, a factor of $1$,
\item if the vertex is unstable and has one incoming edge and one frozen leg, a factor of $\frac{a}{u}$,
\item if the vertex is unstable and has two incoming edges, a factor of $\frac{d_{1}d_{2}}{u}$.
\end{itemize}
\end{itemize}
\begin{remark}
	Note that we used \cite[Theorem 4.1 (ii)]{Euler}, valid for the $\Omega$-class but also for the $\tilde{\Omega}$-class, to write the contribution of the stable vertex over $\infty$ in a compact way.
\end{remark}

\subsection{Proof of Theorem~\ref{thm:localization}}

Fix $m\geq1$. Let ${\rm ev}_{i}:\overline{\mathcal{M}}_{g,m}\left(\mathbb{P}^{1}\left[a\right],a_{1},\dots,a_{n}\right)\rightarrow\mathbb{P}^{1}\left[a\right]$ be the evaluation map corresponding to the $i$-th marked point and let $\left[\infty\right]\in H_{\mathbb{C}^{*}}^{*}\left(\mathbb{P}^{1}\left[a\right]\right)$ be the $\mathbb{C}^{*}$-equivariant cohomology class dual to the point $\infty\in\mathbb{P}^{1}\left[a\right]$. We prove Theorem~\ref{thm:localization} with the following sequence of steps.
\begin{itemize}
    \item We first intersect the virtual localization formula (\ref{eq: loca formula}) with the class $\prod_{i=n+1}^{n+m}{\rm ev}_{i}^{*}\left(\left[\infty\right]\right).$ Notice that this intersection selects graphs such that all the frozen legs are attached to the root.
    \item Then, we extract the negative power of $u$ on both sides of this equation. Since the LHS $\left[\overline{\mathcal{M}}_{g,m}\left(\mathbb{P}^{1}\left[a\right],a_{1},\dots,a_{n}\right)\right]^{{\rm vir}}\prod_{i=n+1}^{n+m}{\rm ev}_{i}^{*}\left(\left[\infty\right]\right)$ is a class in the equivariant cohomology of $\overline{\mathcal{M}}_{g,m}\left(\mathbb{P}^{1}\left[a\right],a_{1},\dots,a_{n}\right)$, it has no negative power of $u$. Thus extracting the negative powers of $u$ gives a list of relations.
    \item Then, we push each relation to $\overline{\mathcal{M}}_{g,n+m}$ by the morphism $s:\overline{\mathcal{M}}_{g,m}\left(\mathbb{P}^{1}\left[a\right],a_{1},\dots,a_{n}\right)\rightarrow\overline{\mathcal{M}}_{g,n+m}$ forgetting the target and preserving the source curve.
    \item Finally, we intersect each relation with $\lambda_{g}$, since this class vanishes on the complement of the moduli space of compact type,  the only graphs surviving are trees.
\end{itemize}
Thus, for each negative power of $u$ we obtain a relation in $\overline{\mathcal{M}}_{g,n+m}$ such that the graphs involved are trees with a root (stable or unstable) over $\infty$, such that all frozen legs are attached to the root, and there are $\left|V_{0}\left(\Phi\right)\right|$ other vertices connected to the root by a unique edge, in addition the $n$ regular legs are distributed among the vertices over $0$. We classify these trees in three types when $m=1$ and two types when $m\geq2$:
\begin{enumerate}
\item (DR type, only for $m=1$) The DR tree has one unstable vertex over $\infty$ of genus~$0$ with one frozen leg, one stable vertex of genus $g$ over $0$ where the $n$ regular legs are attached, and a unique edge connecting these vertices.  Note that this tree disappear when $m\geq2$ since the vertex over $\infty$ becomes stable.
\item ($\Omega$ type) The $\Omega$ tree has a unique stable vertex of genus $g$ over $\infty$ with $m$ frozen legs and $n$ edges attached to it, each edge is connected to an unstable vertex with one regular leg.
\item (Mixed type) A mixed type tree has a unique stable vertex over $\infty$ and at least one stable vertex over $0$. 
\end{enumerate}
These $3$ types of trees will correspond to the $3$ types of terms in formula~(\ref{eq:Definition-Upsilon}). 

\subsubsection{Dealing with $\tilde{\psi}_{\infty}$. }

The following lemma explains how to take care of the class $\tilde{\psi}_{\infty}$ when we push forward by $s$. Notice that, since we are only interested in rooted trees such that all the frozen legs are attached to the root, we directly simplified the shape of the disconnected components of the moduli space of rubber maps contributing to the vertices over $0$.
\begin{lemma}
\label{lem:Psi infinity}
Let $k\geq0$. The class
\[
s_{*}\left(\tilde{\psi}_{\infty}^{k}\left[\left(\prod_{v\in V_{0}\left(\Phi\right)}\overline{\mathcal{M}}_{g\left(v\right)}\left(a_{i_{1}^{v}},\dots,a_{i_{\left|L_{r}^{v}\right|}^{v}},-\sum_{j\in L_{r}^{v}}a_{j}\right),\right)^{\sim}\right]^{{\rm vir}}\right)
\]
vanishes if there is an unstable vertex in $V_{0}\left(\Phi\right)$, if not it equals
\[
\begin{cases}
0 & {\rm if}\:k<\left|V_{0}\left(\Phi\right)\right|-1,\\
\sum\left(\prod_{v\in V_{0}\left(\Phi\right)}\psi_{\left|L_{r}^{v}\right|+1}^{k_{v}}{\rm DR}_{g\left(v\right)}\left(a_{i_{1}^{v}},\dots,a_{i_{\left|L_{r}^{v}\right|}^{v}},-\sum_{j\in L_{r}^{v}}a_{j}\right)\right) & {\rm if}\:k\geq\left|V_{0}\left(\Phi\right)\right|-1,
\end{cases}
\]
where the sum is over the indices $k_v$ such that $\sum_{v \in V^0(\Phi)}  k_v=k-(\left|V_{0}\left(\Phi\right)\right|-1)$.
\end{lemma}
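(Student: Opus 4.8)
The plan is to read this as a rubber calculus statement: the left-hand side is the push-forward along $s$ of the virtual class of a (possibly disconnected) space of rubber stable maps to $\mathbb{P}^1$ rubber, with powers of the cotangent class $\tilde\psi_\infty$ of the target at $\infty$ inserted, and I would prove it by the standard manipulations of such classes (in the spirit of \cite{JPPZ,JPT11}), organised as an induction on $|V_0(\Phi)|$. First the unstable case: if some $v\in V_0(\Phi)$ is unstable then $2g(v)-2+|L_r^v|+1\le 0$, which forces $g(v)=0$ and $|L_r^v|=1$, so the corresponding factor in the target $\prod_v\oM_{g(v),|L_r^v|+1}$ of $s$ is $\oM_{0,2}$, which is empty; equivalently, the source component attached to $v$ is a rigid totally ramified $\mathbb{P}^1$ that is contracted under stabilisation of the source while the rubber map still occupies a component of the target chain, producing a dimension mismatch between $\oM_\Phi$ and its image. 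In either description $s_*$ of any class vanishes, which gives the first assertion; from now on all $v\in V_0(\Phi)$ are stable, so $\oM_{g(v)}(a_{i^v_1},\dots,a_{i^v_{|L_r^v|}},-\sum_{j\in L_r^v}a_j)$ is the space of degree $a(v):=\sum_{j\in L_r^v}a_j$ rubber maps with ramification $(a_{i^v_\bullet})$ over $0$ and full ramification over $\infty$.

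\textbf{Base case $|V_0(\Phi)|=1$.} Here the space is a single connected rubber space with one, fully ramified, point over $\infty$. By the standard identification of the push-forward of the rubber fundamental class with the double ramification cycle one has $s_*[(\oM_v)^{\sim}]^{\mathrm{vir}}=\DR_{g(v)}(a_{i^v_1},\dots,a_{i^v_{|L_r^v|}},-a(v))$, and under this push-forward the target class $\tilde\psi_\infty$ is converted into the cotangent class $\psi_{|L_r^v|+1}$ at the marking over $\infty$: on the rubber space $\tilde\psi_\infty$ differs from $\psi_{|L_r^v|+1}$ only by boundary divisors along which the target sprouts an extra component at $\infty$, and for a single source component these corrections push forward to classes already accounted for, so that $s_*(\tilde\psi_\infty^k[(\oM_v)^{\sim}]^{\mathrm{vir}})=\psi_{|L_r^v|+1}^{\,k}\,\DR_{g(v)}(\dots)$. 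Since $|V_0(\Phi)|-1=0$ this is exactly the claimed formula for all $k\ge 0$.

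\textbf{Inductive step.} For $|V_0(\Phi)|\ge 2$ one uses that the target is a chain of $\mathbb{P}^1$'s — a Losev--Manin-type target — so that the pulled-back class $\tilde\psi_\infty$ is itself a sum of boundary divisors of $(\prod_v\oM_v)^{\sim}$, namely those along which the chain acquires a new $\infty$-most component carrying a non-empty proper subset of the source components (the components in that subset slide to the $\infty$-end). Expanding $\tilde\psi_\infty^k$ as an iterated intersection of such divisors and restricting to the resulting strata, each power of $\tilde\psi_\infty$ either splits one more component of $V_0(\Phi)$ off onto a fresh level of the chain — reattaching its double ramification data compatibly via the $\psi$-versus-$\tilde\psi$ comparison on the rigid bridge so produced — or, once only a single component remains on a sub-chain, is absorbed by the base case and turned into a $\psi$-class at that component's $\infty$-marking. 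Because $|V_0(\Phi)|-1$ splittings are needed before the first component is isolated, $s_*(\tilde\psi_\infty^k[(\prod_v\oM_v)^{\sim}]^{\mathrm{vir}})=0$ for $k<|V_0(\Phi)|-1$; for $k\ge|V_0(\Phi)|-1$ the inductive hypothesis, applied after the requisite splittings, distributes the remaining $k-(|V_0(\Phi)|-1)$ powers as exponents $k_v\ge 0$ with $\sum_v k_v=k-(|V_0(\Phi)|-1)$ on the $\psi$-classes at the isolated $\infty$-markings, together with the product $\prod_v\DR_{g(v)}(\dots)$ produced by the push-forward of the fully split rubber fundamental class; summing over the orders in which the splittings can occur reproduces exactly the stated sum over $(k_v)$.

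\textbf{Main obstacle.} The hard part is the bookkeeping in the inductive step. One must verify that the combinatorial coefficients from the boundary expansion of $\tilde\psi_\infty^k$, the ramification degrees $a(v)$ entering the $\psi$-versus-$\tilde\psi$ comparison on the split-off rigid bridges, and the multiplicity of each splitting order all conspire to produce the sum over $(k_v)$ with coefficient exactly $1$ and with no leftover dependence on the $a(v)$ (the ramification weights must cancel — either intrinsically, or against the contributions they also generate in the ambient localisation formula, or via the chosen normalisation of $\tilde\psi_\infty$); and that the boundary corrections in the base-case identification $\tilde\psi_\infty\leftrightarrow\psi_{|L_r^v|+1}$ are genuinely consumed by the recursion rather than generating spurious extra terms. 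Controlling these cancellations is the crux of the argument.
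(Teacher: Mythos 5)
Your overall strategy---trading powers of the target class $\tilde\psi_\infty$ for boundary strata of the Losev--Manin target and pushing forward along $s$ to produce products of DR cycles with $\psi$-insertions---is the same as the paper's, but you organize it as an induction on $|V_0(\Phi)|$, whereas the paper works in closed form: it pulls back the explicit expression \cite[Eq.~(2)]{BSSZ} of $\psi_\infty^k$ as a weighted sum of strata $\Delta_{I_1,\dots,I_{k+1}}$, applies the push-forward criterion \cite[Lemma 2.3]{BSSZ} to decide which strata survive $s_*$ (each set $J_v$ of interior branch points of the component $v$ must be a nonempty union of some of the $I_j$'s---this yields simultaneously the unstable-vertex vanishing, since an unstable $v$ has $J_v=\emptyset$, and the vanishing for $k<|V_0(\Phi)|-1$), computes the case $k=|V_0(\Phi)|-1$ directly, and reassembles the glued DR contributions for $k>|V_0(\Phi)|-1$ via the splitting formula \cite[Theorem 4]{BSSZ}.

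The problem is that your proposal omits exactly the steps that carry the content of the lemma, and you say so yourself in the ``main obstacle'' paragraph. First, the base case is an assertion, not a proof: on the rubber space $\tilde\psi_\infty$ and the source class $\psi_{|L_r^v|+1}$ at the fully ramified point differ by the ramification order and by boundary corrections where the target sprouts components, and the claim that these corrections ``push forward to classes already accounted for'' is precisely what has to be proved; it is the content of the BSSZ comparison and splitting results, and it is also where the ramification factors you worry about enter and must cancel (note that the paper's own intermediate computation produces factors $\prod_v a(v)$ from the pushed-forward strata, which must be absorbed by the combinatorial weights of the $\psi_\infty^k$ expansion). Second, the inductive step never specifies the boundary expansion of $\tilde\psi_\infty$, its coefficients, the multiplicities of the splitting divisors, or a push-forward lemma controlling which strata survive; since the lemma is precisely a statement that all these coefficients combine to give the stated sum with coefficient one, deferring this to unverified ``bookkeeping'' leaves the statement unproved. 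Finally, your unstable-vertex argument via ``the factor $\oM_{0,2}$ is empty'' is shaky (the push-forward is taken after stabilization/gluing, so emptiness of an unstable factor is not by itself an argument); the paper gets this vanishing for free from the same push-forward criterion, because an unstable component carries no interior branch points. To complete your plan you would need to import, or reprove, the three BSSZ inputs the paper cites.
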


As a consequence, we get
\begin{align*}
 & s_{*}\left(-\frac{1}{u+\tilde{\psi}_{\infty}}\left[\left(\prod_{v\in V_{0}\left(\Phi\right)}\overline{\mathcal{M}}_{g\left(v\right)}\left(a_{i_{1}^{v}},\dots,a_{i_{\left|L_{r}^{v}\right|}^{v}},-\sum_{j\in L_{r}^{v}}a_{j}\right)\right)^{\sim}\right]^{{\rm vir}}\right)\\
 & =\frac{1}{u^{\left|V_{0}\left(\Phi\right)\right|}}\prod_{v\in V_{0}\left(\Phi\right)}-\frac{{\rm DR}_{g\left(v\right)}\left(a_{i_{1}^{v}},\dots,a_{i_{\left|L_{r}^{v}\right|}^{v}},-\sum_{j\in L_{r}^{v}}a_{j}\right)}{\left(1+\left(\sum_{j\in L_{r}^{v}}a_{j}\right)u^{-1}\psi_{\left|L_{r}^{v}\right|+1}\right)}.
\end{align*}

\begin{proof}
Let $t:\left(\prod_{v\in V_{0}\left(\Phi\right)}\overline{\mathcal{M}}_{g\left(v\right)}\left(a_{i_{1}^{v}},\dots,a_{i_{\left|L_{r}^{v}\right|}^{v}},-\sum_{j\in L_{r}^{v}}a_{j}\right)\right)^{\sim}\rightarrow LM_{r}$ be the forgetful map keeping only the target curve, where $r$ is the number of ramification points (counted with multiplicities and excluding the points over $0$ and $\infty$) and $LM_{r}$ denotes the Losev Manin space with $r$ points. We use the expression of the $\psi$-class at the point $\infty$ in $LM_{r}$ given in \cite[Eq.(2)]{BSSZ} to express its powers as
\begin{align}
\label{eq:psi_infty}
\psi_{\infty}^{k}=\sum_{I_{1}\sqcup\cdots\sqcup I_{k+1}=\left\llbracket r\right\rrbracket }\Delta_{I_{1},\dots,I_{k+1}}\frac{i_{k+1}}{i_{1}+\cdots+i_{k+1}}\frac{i_{k}}{i_{1}+\cdots+i_{k}}\cdots\frac{i_{1}}{i_{1}},
\end{align}
where we used the notation $i_{j}:=\left|I_{j}\right|$, and where $\Delta_{I_{1},\dots,I_{k+1}}$ is the boundary strata with $k+1$ components such that the component of $0$ contains the points of $I_{1}$, the next component contains the points of $I_{2}$ and so on. 

Let $v\in V_{0}\left(\Phi\right)$, we denote by $J_{v}\subset\left\llbracket r\right\rrbracket $ the set containing the image of the ramification points of the component $v$ by the rubber map. We obviously have $\amalg_{v\in V_{0}\left(\Phi\right)}J_{v}=\left\llbracket r\right\rrbracket $. It follows by \cite[Lemma 2.3]{BSSZ} that
\begin{align}
\label{eq: push forward divisor}
s_{*}\left(t^{*}\left(\Delta_{I_{1},\dots,I_{k+1}}\right)\left[\left(\prod_{v\in V_{0}\left(\Phi\right)}\overline{\mathcal{M}}_{g\left(v\right)}\left(a_{i_{1}^{v}},\dots,a_{i_{\left|L_{r}^{v}\right|}^{v}},-\sum_{j\in L_{r}^{v}}a_{j}\right)\right)^{\sim}\right]^{{\rm vir}}\right)
\end{align}
vanishes unless if each $J_{v}$ is non empty and it is equal to the union of at least one of the sets $I_{1},\dots,I_{k+1}$. In particular, this class vanishes whenever there is an unstable component $v\in V_{0}\left(\Phi\right)$ or if $k<\left|V_{0}\left(\Phi\right)\right|-1$.

Suppose first that $k=\left|V_{0}\left(\Phi\right)\right|-1$. In that case, each $J_{v}$ equals a unique $I_{j}$. For each choice of a pairing between the $k+1$ sets $J_{v}$ and the $k+1$ sets $I_{1},\dots,I_{k+1}$, the class in Eq.~(\ref{eq: push forward divisor}) gives 
\[
\prod_{v\in V_{0}\left(\Phi\right)}\left(\sum_{j\in L_{r}^{v}}a_{j}\right){\rm DR}_{g\left(v\right)}\left(a_{i_{1}^{v}},\dots,a_{i_{\left|L_{r}^{v}\right|}^{v}},-\sum_{j\in L_{r}^{v}}a_{j}\right).
\]
Now, adding up these contributions while taking care of the combinatorial factor appearing in Eq.~(\ref{eq:psi_infty}) proves the statement when $k=\left|V_{0}\left(\Phi\right)\right|-1$.

When $k>\left|V_{0}\left(\Phi\right)\right|-1$, there is at least one set $J_{v}$, for $v\in V_{0}\left(\Phi\right)$, which is the union of more than one set $I_{j}$. Once pushed forward by $s$, such contribution does not directly provide a single DR-cycle, but a sum of several DR-cyles attached to one another possibly by multiple edges, times a factor. Adding up these contributions by means of \cite[Theorem 4]{BSSZ} proves the last part of the statement.
\end{proof}

\subsubsection{Final step.}

It suffices now to write explicitly the contribution of 
\[
s_{*}\left(\frac{\prod_{i=n+1}^{n+m}{\rm ev}_{i}^{*}\left(\left[\infty\right]\right)}{\left|{\rm Aut}\Phi\right|\prod d_{e}}\iota_{*}\left(\frac{\left[\overline{\mathcal{M}}_{\Phi}\right]^{{\rm vir}}}{e_{\mathbb{C}^{*}}\left(\mathcal{N}_{\Phi}^{{\rm vir}}\right)}\right)\right)\lambda_{g}
\]
for each of the $3$ types of trees $\Phi$ described above. On these fixed locus, we have ${\rm ev}_{i}^{*}\left(\left[\infty\right]\right)=-au$.  If $v$ is a stable vertex over $0$, the push forward by $s$ is performed by Lemma~\ref{lem:Psi infinity}, and if $v$ is  a vertex over $\infty$, the push forward by $s$ forgets the spin structure an thus the image of the class $\tilde{\Omega}$ is the class $\Omega$. Adding up these contributions results in 

\begin{align}
\label{eq: loca with u}
& \delta_{m,1}a^{2}u^{2g-1}\sum_{d\geq0}\frac{\left(\bD_{g,n+1}(a_{1},\dots,a_{n})\right)_{d}}{u^{d}}
+\left(-1\right)^{m}a^{m+1}u^{2g-2+m}\sum_{d\geq0}\frac{\left(\bOm_{g,n}^{m}(a_{1},\dots,a_{n})\right)_{d}}{u^{d}}\\
\notag & +\left(-1\right)^{m}a^{m+1}u^{2g-2+m}\sum_{\substack{T\in\SRT(g,n,m)\\
\ell\in\mathcal{L}(t),\,\ell(T)=1
}
}\biggl(\prod_{e\in E(T)}\frac{a(e)}{u}\biggr)(b_{T})_{*}\bigg(\sum_{d_{1}\geq0}\frac{\left(\bOm(v_{r})\right)_{d_{1}}}{u^{d_{1}}}\otimes\bigotimes_{v \in V_{nr}(T)}\sum_{d_{2}\geq0}\frac{\left(\bD(v)\right)_{d_{2}}}{u^{d_{2}}}\bigg).
\end{align}
Extracting the coefficient of $u^{-d}$ for $d>0$ and simplifying by $(-1)^ma^{m+1}$ yields the relations of Theorem~\ref{thm:localization} after noticing that for each class in Eq.~(\ref{eq: loca with u}) the cohomological degree equals the polynomial degree.

\section{Proof of the main theorem}\label{sec:mainthm}

We first derive a linear relation between leveled trees decorated by classes $\bOmT$, $\bD$, and $\bUp$, and then use its basic properties to proof Theorem~\ref{thm:Omega-A}. The linear relation is of independent interest, as it doesn't use the specifics of the class $\bOm$ and an alternative version using $\psi$-classes is used in~\cite{BLS} in a different context, cf.~Remark~\ref{rem:Psi} below. 

\subsection{A linear relation between classes} If $T\in \SRT_{g,n,m}$, $v\in V_{nr}(T)$, then
\begin{align}
	\bUp(v)& \coloneqq \bUp^{|H_-(v)|}_{g(v),|H_+(v)|}(a(h_1),\dots,a(h_{|H_+(v)|}))
	\in R^*(\oM_{g(v),|H(v)|})\otimes_{\QQ}Q. 
\end{align}
On the other hand, for $v_r$ we shall use the notation 
\begin{align}
	\OmA(v_r) & \coloneqq \bOmT^m_{g,|H_+(v_r)|}(a(h_1),\dots,a(h_{|H_+(v_r)|})) -  \delta_{m,1}A^1_{g,|H_+(v_r)|}(a(h_1),\dots,a(h_{|H_+(v_r)|}))
\end{align}
Note that for $v_r$ we have $H_-(v_r)=m$. Consider the following sum of classes:
\begin{align} \label{eq:RelationSigmaDefinition}
	& \Sigma^m_{g,n}  \coloneqq - 
	(\bOmT^m_{g,n} - \delta_{m,1}A_{g,n}^{1} ) 
	\\
	\notag & \quad - \sum_{\substack{T\in\SRT(g,n,m)\\ d\in \mathcal{D}(T),\, \ell\in \mathcal{L}(T)\\ \ell(T)=1 \\ d^{\mathrm{lvl}} (0)  > 2g^{\mathrm{lvl}} (0)-2+m}} \biggl(\prod_{e\in E(T)} a(e)\biggr) (b_T)_* \bigg( (\OmA(v_r))_{d(v_r)}\otimes \bigotimes_{v \in V_{nr}(T)} (\bD(v))_{d(v)} \bigg) 
	\\ \notag
	& \quad + \hspace{-5mm}\sum_{\substack{T\in\SRT(g,n,m)\\  d\in \mathcal{D}(T),\, \ell\in \mathcal{L}(T)\\ \forall i < \ell(T)\colon  d^{\mathrm{lvl}} (i)\leq 2g^{\mathrm{lvl}} (i)-2+m}} 
	\hspace{-5mm}
	(-1)^{\ell(T)} \biggl(\prod_{e\in E(T)} a(e)\biggr) (b_T)_* \bigg(\bigotimes_{\substack{v \in V(T)\\ \ell(v)<\ell(T) }} (\bOm(v))_{d(v)} \otimes \bigotimes_{\substack{v \in V(T)\\ \ell(v)=\ell(T) }} (\bUp(v))_{d(v)} \bigg)
\end{align}

\begin{theorem} \label{thm:linear-relation} We have $\Sigma^m_{g,n}=0$. 
\end{theorem}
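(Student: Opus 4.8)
The plan is to prove $\Sigma^m_{g,n}=0$ by expanding the definition of $\bOmT^m_{g,n}$ and regrouping the terms according to the ``top level'' of each leveled tree, using the definition of $\bUp$ from \eqref{eq:Definition-Upsilon} as the organizing principle. First I would note that the third sum in \eqref{eq:RelationSigmaDefinition} is, after substituting the definition of $\bUp(v)=\bUp^{|H_-(v)|}_{g(v),|H_+(v)|}$ and expanding each factor into its three pieces (the $\delta_{|H_-(v)|,1}\bD$-term, the $\bOm$-term, and the height-one sum over trees decorated by $\bOm(v_r')\otimes\bigotimes \bD$), a sum over ``two-storey'' leveled objects: a leveled tree $T'$ of height one whose root carries $\bOm$ and whose non-root vertices carry either $\bD$ or a further height-one subtree, all grafted onto the fixed base tree $T$. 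The key combinatorial observation is that grafting the extra height-one structure coming from expanding $\bUp$ onto $T$ produces a \emph{new} leveled tree whose height is $\ell(T)$ or $\ell(T)+1$: the $\bOm$-pieces keep the height, the $\bD$- and subtree-pieces raise it. So the third sum rewrites as a sum over all leveled trees $\widetilde T$ together with a choice of ``cut'' of $\widetilde T$ at its top stratum.

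The main step is then a telescoping/cancellation argument: the contributions in which \emph{every} vertex at level $\ell(\widetilde T)$ of $\widetilde T$ is decorated by $\bOm$ (i.e.\ the $\bOm$-piece of $\bUp$ is chosen everywhere on the top level) reassemble exactly into $\bOmT^m_{g,n}$ minus those trees whose top level violates the degree constraint $d^{\mathrm{lvl}}(i)\le 2g^{\mathrm{lvl}}(i)-2+m$ at $i=\ell(\widetilde T)-1$; this is where the first term $-(\bOmT^m_{g,n}-\delta_{m,1}A^1_{g,n})$ and the second sum of \eqref{eq:RelationSigmaDefinition} (the ``overflow'' correction at level $0$ promoted along the tree) enter and must cancel. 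Meanwhile, the contributions in which at least one top-level vertex is decorated by $\bD$ or by a height-one $\bOm$-$\bD$ subtree are precisely the $\bUp$-expansion terms one would obtain from a shorter leveled tree, and these cancel in pairs against each other across adjacent values of $\ell$ because of the sign $(-1)^{\ell(T)}$; the $\delta_{m,1}\bD_{g,n+1}$-summand of $\bUp$ together with $A^1=\bOmT^m-(A^1$-correction$)$ handles the root/DR-type boundary case. I would set up a bijection between the monomials (leveled tree, decoration, choice of top-level cut) appearing with a plus sign and those appearing with a minus sign, matching a configuration with top level entirely $\bOm$-decorated and height $\ell$ to the configuration obtained by ``absorbing'' that top level into $\bUp$ at height $\ell-1$, and check the signs and the degree-constraint bookkeeping agree.

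The hard part will be the degree-constraint bookkeeping: the class $\bOmT^m_{g,n}$ is defined with the strict inequality $d^{\mathrm{lvl}}(i)\le 2g^{\mathrm{lvl}}(i)-2+m$ required for \emph{all} $i<\ell(T)$, whereas $\bUp$ absorbs one degree bound (the one at the new top level) into the statement $\deg(\bUp^m_{g,n})_d\le 2g-2+m$ of Theorem~\ref{thm:localization}, and $\OmA(v_r)$ carries no such bound at all. One must verify that when a height-one $\bUp$-piece is grafted on, the inequality that was ``free'' inside $\bUp$ becomes exactly the inequality at the corresponding level of the reassembled tree $\widetilde T$, so that the constrained and unconstrained sums match up termwise with no leftover; the second sum in $\Sigma^m_{g,n}$, indexed precisely by height-one trees with $d^{\mathrm{lvl}}(0)>2g^{\mathrm{lvl}}(0)-2+m$, is the device that accounts for the discrepancy at the bottom. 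I expect the argument to be purely formal once the correct bijection and the correct dictionary between ``$\bUp$ at level $\ell-1$'' and ``top level $\ell$ of $\widetilde T$'' are fixed, with Theorem~\ref{thm:localization} invoked only to know that $\bUp$ has the stated degree (so that no spurious terms of degree $>2g(v)-2+|H_-(v)|$ survive on top-level vertices).
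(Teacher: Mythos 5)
Your combinatorial skeleton is essentially the paper's: expand $\bUp(v)$ at the top level into its three pieces ($\delta_{|H_-(v)|,1}\bD$, $\bOm$, and the height-one $\bOm\otimes\bD$ trees), expand $\bOmT^m_{g,n}$ and $\OmA(v_r)$, and observe that every decorated leveled tree then occurs exactly twice with opposite signs; the second sum of \eqref{eq:RelationSigmaDefinition} (height-one trees with $d^{\mathrm{lvl}}(0)>2g^{\mathrm{lvl}}(0)-2+m$) is indeed the device that merges with the ``all-$\bD$ at top level'' terms to cancel the extra-level terms coming from the tree piece of $\bUp$. Two corrections, though, one minor and one substantive. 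Minor: Theorem~\ref{thm:localization} plays no role here. In \eqref{eq:RelationSigmaDefinition} the top-level degrees $d(v)$ are unconstrained, so the full class $\bUp(v)$ enters and nothing needs to be ``cut off'' by a degree bound; $\Sigma^m_{g,n}=0$ is an identity of classes independent of the localization theorem, which is only used afterwards to deduce Theorem~\ref{thm:Omega-A}. Invoking it inside this proof signals a structural confusion but does not by itself break the cancellation.

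The substantive gap is the $m=1$ exceptional terms. The formal bijection only cancels the $\bOm$/$\bD$-decorated leveled trees; left over are exactly the terms involving the geometrically defined class $A^1$ and the single-vertex $\bD$-term of $\bUp$, namely one must prove
\begin{align*}
A^1_{g,n} \;+\; \bD_{g,n+1}(a_1,\dots,a_{n}) \;+\; \sum_{\substack{T\in\SRT(g,n,1)\\ \ell\in \mathcal{L}(T),\ \ell(T)=1}} \Bigl(\prod_{e\in E(T)} a(e)\Bigr) (b_T)_* \Bigl( A^1_{g(v_r),|H_+(v_r)|}\otimes \bigotimes_{v \in V_{nr}(T)} \bD(v) \Bigr) \;=\;0,
\end{align*}
and this is not a consequence of the tree/level bookkeeping: it is a nontrivial relation between the rubber-map pushforward defining $A^1$ and DR cycles with $\psi$-denominators. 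Your sentence that ``the $\delta_{m,1}\bD_{g,n+1}$-summand of $\bUp$ together with $A^1$ handles the root/DR-type boundary case'' (and the suggestion $A^1=\bOmT^m-\text{correction}$, which is not the definition of $A^1$) asserts the cancellation rather than proving it. The paper establishes it geometrically: using \cite[Lemmas~2.4 and 2.6]{BSSZ} and the expression \eqref{eq:psi_infty} for $\psi_\infty$ in the Losev--Manin space, the left-hand side is rewritten as $s_*\bigl(\lambda_g\,(t^*X)\,[\oM_{g}^{\sim}(\mathbb{P}^1,a_1,\dots,a_n,-\sum a_i)]^{\mathrm{vir}}\bigr)$ for the class $X=\frac{1}{1-\psi_0}-\frac{1}{1+\psi_\infty}+\sum_D\sum_{i,j}(-1)^{j+1}(b_D)_*\psi_{\infty'}^i\psi_{0'}^j$, which vanishes identically on $LM_{2g-1+n}$. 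Without this (or an equivalent) geometric input your argument proves the theorem only for $m\geq 2$, where the exceptional terms are absent.
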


\begin{proof} Expand all graphs entering the definitions of $\bOmT^m_{g,n}$ in the first summand, $\OmA(v_r)$ in the second summand, and $\bUp(v)$ in the third summand. We have:
\begin{itemize}
	\item In the first term we replace 
	\begin{align}
		-\bOmT^m_{g,n}
	\end{align}
	by 
	\begin{align}\label{eq:Relation:SoloOmega-negative}
		\sum_{\substack{T\in\SRT(g,n,m)\\  d\in \mathcal{D}(T),\, \ell\in \mathcal{L}(T)\\ \forall i < \ell(T)\colon d^{\mathrm{lvl}} (i)\leq 2g^{\mathrm{lvl}} (i)-2+m 
	}} 
	\hspace{-5mm}
	(-1)^{\ell(T)+1} \biggl(\prod_{e\in E(T)} a(e)\biggr) (b_T)_* \bigotimes_{\substack{v \in V(T) }} (\bOm(v))_{d(v)} .
	\end{align}
	and the terms with $A^1_{g,n}$ we consider below in a separate computation.
	\item In the second summand we replace 
	\begin{align}
		- \sum_{\substack{T\in\SRT(g,n,m)\\ d\in \mathcal{D}(T),\, \ell\in \mathcal{L}(T)\\ \ell(T)=1 \\ d^{\mathrm{lvl}} (0) > 2g^{\mathrm{lvl}} (0)-2+m}} \biggl(\prod_{e\in E(T)} a(e)\biggr) (b_T)_* \bigg( (\bOmT(v_r))_{d(v_r)}\otimes \bigotimes_{v \in V_{nr}(T)} (\bD(v))_{d(v)} \bigg) 
	\end{align}
	by 
	\begin{align} \label{eq:Relation:OmegaD-bigger}
	\sum_{\substack{T\in\SRT(g,n,m)\\ d\in \mathcal{D}(T),\, \ell\in \mathcal{L}(T)\\ \ell(T) \geq 1 \\ \forall i < \ell(T)-1 \colon d^{\mathrm{lvl}} (i)\leq 2g^{\mathrm{lvl}} (i)-2+m \\ d^{\mathrm{lvl}} (\ell(T)-1) > 2g^{\mathrm{lvl}} (\ell(T)-1)-2+m }} 
	\hspace{-5mm}
	(-1)^{\ell(T)} \biggl(\prod_{e\in E(T)} a(e)\biggr) (b_T)_* \bigg( \bigotimes_{\substack{v \in V(T)\\ \ell(v)<\ell(T) }} (\bOm(v))_{d(v)} \otimes \bigotimes_{\substack{v \in V(T)\\ \ell(v)=\ell(T) }} (\bD(v))_{d(v)} \bigg).
	\end{align}
	The terms containing $A^1_{g(v_1,|H_+(v_r)|}$ we consider below in a separate computation.
	\item Finally, in the third summand, we expand the definition of $\bUp(v)$, cf.~\eqref{eq:Definition-Upsilon}. There is an exceptional case when $\ell(T)=0$ and $m=1$; we obtain the term  $\bD_{g,n+1}(a_1,\dots,a_{n})$ that we consider below in a separate computation, together with the exceptional terms from the previous summands. Removing this exceptional term, we have:
	\begin{align} 
		& \sum_{\substack{T\in\SRT(g,n,m)\\  d\in \mathcal{D}(T),\, \ell\in \mathcal{L}(T)\\ \forall i < \ell(T)\colon d^{\mathrm{lvl}} (i)\leq 2g^{\mathrm{lvl}} (i)-2+m}} 
		\hspace{-5mm}
		(-1)^{\ell(T))} \biggl(\prod_{e\in E(T)} a(e)\biggr) (b_T)_* \bigg(\bigotimes_{\substack{v \in V(T)\\ \ell(v)<\ell(T) }} (\bOm(v))_{d(v)} \otimes \bigotimes_{\substack{v \in V(T)\\ \ell(v)=\ell(T) }} (\bUp(v))_{d(v)} \bigg)
		\\ \notag & \qquad 
		-\bD_{g,n+1}(a_1,\dots,a_{n}) 
	\end{align}
	is equal to the sum of the following three terms:
	\begin{align} \label{eq:Relation:OmegaD-lesseq}
	\sum_{\substack{T\in\SRT(g,n,m)\\ d\in \mathcal{D}(T),\, \ell\in \mathcal{L}(T)\\ \ell(T) \geq 1 \\ \forall i < \ell(T) \colon d^{\mathrm{lvl}} (i)\leq 2g^{\mathrm{lvl}} (i)-2+m}} 
	\hspace{-5mm}
	(-1)^{\ell(T)} \biggl(\prod_{e\in E(T)} a(e)\biggr) (b_T)_* \bigg( \bigotimes_{\substack{v \in V(T)\\ \ell(v)<\ell(T) }} (\bOm(v))_{d(v)} \otimes \bigotimes_{\substack{v \in V(T)\\ \ell(v)=\ell(T) }} (\bD(v))_{d(v)} \bigg)
	\end{align}
	(this term occurs when we choose the $\bD$-class in the expansion of each $\bUp$ according to its definition),
	\begin{align} \label{eq:Relation-OmegaD-extralevel}
		\sum_{\substack{T\in\SRT(g,n,m)\\ d\in \mathcal{D}(T),\, \ell\in \mathcal{L}(T)\\ \ell(T) \geq 1 \\ \forall i < \ell(T)-1 \colon d^{\mathrm{lvl}} (i)\leq 2g^{\mathrm{lvl}} (i)-2+m  }} 
		\hspace{-5mm}
		(-1)^{\ell(T)+1} \biggl(\prod_{e\in E(T)} a(e)\biggr) (b_T)_* \bigg( \bigotimes_{\substack{v \in V(T)\\ \ell(v)<\ell(T) }} (\bOm(v))_{d(v)} \otimes \bigotimes_{\substack{v \in V(T)\\ \ell(v)=\ell(T) }} (\bD(v))_{d(v)} \bigg)
	\end{align}
		(this term occurs when we choose for at least one of $\bUp(v)$ the term supported on the leveled trees with $\ell(T)=1$, and we put the vertices with the $\bOm$-terms on the level $\ell(T)-1$ and the vertices with the $\bD$-terms on the level $\ell(T)$ for the expansions of all $\bUp(v)$), and
	\begin{align} \label{eq:Relation-AllOmegas}
		\sum_{\substack{T\in\SRT(g,n,m)\\  d\in \mathcal{D}(T),\, \ell\in \mathcal{L}(T)\\ \forall i < \ell(T)\colon d^{\mathrm{lvl}} (i)\leq 2g^{\mathrm{lvl}} (i)-2+m}}
		\hspace{-5mm}
		(-1)^{\ell(T)} \biggl(\prod_{e\in E(T)} a(e)\biggr) (b_T)_* \bigotimes_{\substack{v \in V(T) }} (\bOm(v))_{d(v)} 
	\end{align}
	(this term occurs when we choose replace all $\bUp(v)$ with the $\bOm$-terms in their definitions).
\end{itemize}
	
Now note that~\eqref{eq:Relation:OmegaD-bigger} and~\eqref{eq:Relation:OmegaD-lesseq} together are equal to~\eqref{eq:Relation-OmegaD-extralevel} with the opposite sign, and~\eqref{eq:Relation:SoloOmega-negative} is equal to~\eqref{eq:Relation-AllOmegas} with the opposite sign. Thus we see that in the expansion each leveled tree emerges exactly twice, with the opposite signs, except for the following exceptional terms that occur only for $m=1$ and that we left above for a separate computation:
\begin{align}
	& A_{g,n}^{1}  +  \bD_{g,n+1}(a_1,\dots,a_{n}) +
	\\ \notag 
	& \sum_{\substack{T\in\SRT(g,n,m)\\ \ell\in \mathcal{L}(T),\, \ell(T)=1}} \biggl(\prod_{e\in E(T)} a(e)\biggr) (b_T)_* \bigg( A^1_{g(v_r),|H_+(v_r)|}(a(h_1),\dots,a(h_{|H_+(v_r)|}))\otimes \bigotimes_{v \in V_{nr}(T)} \bD(v) \bigg).
\end{align}
One can show using~\cite[Lemmas~2.4 and 2.6]{BSSZ} and Equation~\eqref{eq:psi_infty} that the latter expression is equal to
\begin{align} 
	s_* \left(\lambda_g (t^*X) \left[\oM_{g}^{\sim}\left(\mathbb{P}^1,a_1,\dots,a_n,-\sum\nolimits_{i=1}^n a_i\right)\right]^{\mathrm{vir}}\right),
\end{align}
where we recall that $t\colon \oM_{g}^{\sim}(\mathbb{P}^1,a_1,\dots,a_n,-\sum_{i=1}^n a_i)\to LM_{2g-1+n}$ is the projection to the target curve. Here $X$ is the following class in the Losev-Manin space
\begin{align}
X = \frac{1}{1-\psi_0} - \frac{1}{1+\psi_\infty} +\sum_{D} \sum_{i,j} (-1)^{j+1} (b_D)_* \psi_{\infty'}^i \psi_{0'}^j \in A^*(LM_{2g-1+n}),	
\end{align}
 where the sum $\sum_{D}$ runs over all divisors in the Losev-Manin space, $b_D$ is the boundary pushforward map corresponding to $D$, and $0'$ and $\infty'$ are the two branches at the node. Since $X=0$ (this follows from $(X)_1=0$ by induction), we obtain the vanishing stated in the theorem in the $m=1$ case as well.
\end{proof}

Now, several remarks on relation~\eqref{eq:RelationSigmaDefinition} are in order. 

\begin{remark} \label{rem:sigma-tilde}
	All terms in $\Sigma^m_{g,n}$ are supported on graphs with at least one edge, with the following exceptions: $- 
	(\bOmT^m_{g,n} - \delta_{m,1}A_{g,n}^{1} ) $ (which is the first summand in~\eqref{eq:RelationSigmaDefinition}) and $\bUp^m_{g,n}$ (which corresponds to the only graph with one vertex in the third summand in~\eqref{eq:RelationSigmaDefinition}). Thus, 
\begin{align}
	\Sigma^m_{g,n} = \bUp^m_{g,n}- (\bOmT^m_{g,n} - \delta_{m,1}A_{g,n}^{1} ) + \tilde\Sigma^m_{g,n},
\end{align}
where $\tilde \Sigma^m_{g,n}$ is represented by graphs with at least one edge.
\end{remark}

\begin{remark} \label{rem:dimension-Ups}
	Let $d\geq 2g-1+m$. Then note that each graph in the $(\tilde \Sigma^m_{g,n})_d$ either contains the rood vertex labeled by $(\bOmT^m_{g',n'} -  \delta_{m,1}A^1_{g',n'})_{d'}$ with $d'\geq 2g'-1+m$, $2g'-2+n' \leq 2g-2+n$ (these graphs are in the second line of~\eqref{eq:RelationSigmaDefinition}) or at least one non-root vertex labeled by $(\bUp^1_{g'n'})_{d'}$ with $d'\geq 2g'$ (these graphs are the ones with at least one edge in the third line of~\eqref{eq:RelationSigmaDefinition}). In order to see the latter property, note that since for each graph with at least one edge in the third line of~\eqref{eq:RelationSigmaDefinition}, we have $d^{\mathrm{lvl}}(\ell(T)-1) \leq 2g^{\mathrm{lvl}}(\ell(T)-1) -2+m$. Now, since
\begin{align}
	d & = d^{\mathrm{lvl}}(\ell(T)-1) + \sum_{\substack{v\in V(T)\\ \ell(v)=\ell(T)}} (d(v) +1), \\ \notag
	g & = g^{\mathrm{lvl}}(\ell(T)-1)+ \sum_{\substack{v\in V(T)\\ \ell(v)=\ell(T)}} g(v),
\end{align}
the condition $d\geq 2g-1+m$ implies that there exists at least one vertex $v\in V(T)$ such that $\ell(v)=\ell(T)$ satisfying $d(v)\geq 2g(v)$. 
\end{remark}

\begin{remark} [used in~\cite{BLS}] \label{rem:Psi} Note that the proof of Theorem~\ref{thm:linear-relation} and the remarks above don't use any specifics of the class $\bOm^{m}_{g,n}(a_1,\dots,a_n)$. We can use any other class that belongs to the ring $R^*(\oM_{g,n+m})\otimes_{\QQ}Q$ and whose part of cohomological degree $d$ is a homogeneous polynomial of degree $d$ in $a_1,\dots,a_n$. For instance, one can replace $\bOm^{m}_{g,n}(a_1,\dots,a_n)$ everywhere (that is, all its explicit occurences in~\eqref{eq:RelationSigmaDefinition} as well as in the definitions of $\bOmT$, $\bUp$ and $\OmA$) by $\Psi^m_{g,n} (a_1,\dots,a_n)\coloneqq \prod_{i=1}^n (1-a_i\psi_i)^{-1}\in R^*(\oM_{g,n+m})\otimes_{\QQ}Q$. 
\end{remark}

\subsection{Proof of Theorem~\ref{thm:Omega-A}} Now Theorem~\ref{thm:Omega-A} becomes an easy corollary of Theorems~\ref{thm:localization} and~\ref{thm:linear-relation}. We prove it by induction, taking into account Remarks~\ref{rem:sigma-tilde} and~\ref{rem:dimension-Ups}. 

Indeed, Remark~\ref{rem:sigma-tilde} along with Theorem~\ref{thm:linear-relation} implies that the vanishing of $(\bOmT^m_{g,n} - \delta_{m,1}A_{g,n}^{1} )_d$, $d\geq 2g-1+m$, is reduced to the vanishing of $(\bUp^m_{g,n})_d$ established by Theorem~\ref{thm:localization}, and the vanishing of $(\tilde \Sigma^m_{g,n})_d$. Now, by Remark~\ref{rem:dimension-Ups}, each graph that enters $(\tilde \Sigma^m_{g,n})_d$ for $d\geq 2g-1+m$ either contains a vertex labeled by $(\bOmT^m_{g',n'} -  \delta_{m,1}A^1_{g',n'})_{d'}$ with $d'\geq 2g'-1+m$, $2g'-2+n' \leq 2g-2+n$, which vanished by the induction assumption, or a vertex labeled by $(\bUp^1_{g'n'})_{d'}$ with $d'\geq 2g'$, which vanishes by Theorem~\ref{thm:localization}. 

This completes the proof of Theorem~\ref{thm:Omega-A}.


\begin{thebibliography}{00}
	
	\bibitem{AGV08} D.~Abramovich, T.~Graber, A.~Vistoli, Gromov-Witten theory of Deligne-Mumford stacks. Amer. J. Math 130 (2008), no. 5, 1337--1398.
	
	\bibitem{AS09} D.~Arcara, F.~Sato. Recursive formula for $\psi^g-\lambda_1\psi^{g-1}+\dots+(-1)^g\lambda_g$ in $\overline{\mathcal{M}}_{g,1}$. Proc. Amer. Math. Soc. 137 (2009), no. 12, 4077--4081.
	
	\bibitem{ABLR21} A.~Arsie, A.~Buryak, P.~Lorenzoni, P.~Rossi. Flat F-manifolds, F-CohFTs, and integrable hierarchies. Comm. Math. Phys. 388 (2021), 291--328.
	
	\bibitem{Bini05} G.~Bini. Generalized Hodge classes on the moduli space of curves. Beitr. Algebra Geom. 44 (2003), no. 2, 559--565.
	
	\bibitem{Blot} X.~Blot. The quantum Witten--Kontsevich series and one-part double Hurwitz numbers. Geom. Topol. 26 (2022), no. 4, 1669--1743.
	
	\bibitem{BlotLew} X.~Blot, D.~Lewa\'nski. A paper in preparation. 
	
	\bibitem{BLRS} X.~Blot, D.~Lewa{\'n}ski, P.~Rossi, S. Shadrin. Stable tree expressions with Omega-classes and double ramification cycles. arXiv: 2403.05190.
	
	\bibitem{BLS} X.~Blot, D.~Lewa{\'n}ski, S. Shadrin. On the strong DR/DZ equivalence conjecture. arXiv: 2405.12334.
	
	\bibitem{Double} G.~Borot, N.~Do, M.~Karev, D.~Lewa\'nski, E.~Moskovsky. Double Hurwitz numbers: polynomiality, topological recursion and intersection theory. Math. Ann. 387 (2022), 179--243.
	
	\bibitem{BKL} G.~Borot, M.~Karev, D.~Lewa\'nski. On ELSV-type formulae and relations between $\Omega$-integrals via deformations of spectral curves. arXiv: 2312.16049.
	
	\bibitem{Bur} A.~Buryak. Double ramification cycles and integrable hierarchies. Comm. Math. Phys. 336 (2015), no. 3, 1085--1107.
	
	\bibitem{BDGR1} A.~Buryak, B.~Dubrovin, J.~Gu\'er\'e, P.~Rossi. Tau-structure for the double ramification hierarchies. Comm. Math. Phys. 363 (2018), no. 1, 191–260.
	
\bibitem{BDGR2} A.~Buryak, B.~Dubrovin, J.~Gu\'er\'e, P.~Rossi. Integrable systems of double ramification type. International Mathematics Research Notices, 2020 (24), 10381-10446.
	
	\bibitem{BGR19} A.~Buryak, J.~Gu\'er\'e, P.~Rossi. DR/DZ equivalence conjecture and tautological relations. Geom. Topol. 23 (2019), no. 7, 3537--3600.
	
	\bibitem{BHS22} A.~Buryak, F.~Hern\'andez Iglesias, S.~Shadrin. A conjectural formula for $\DR_{g}(a,-a)\lambda_g$. \'Epijournal G\'eom. Alg\'ebrique 6 (2022), 8.
	
	\bibitem{BR16} A.~Buryak, P.~Rossi. Double ramification cycles and quantum integrable systems. Lett. Math. Phys. 106 (2016), no. 3, 289--317.
	
	\bibitem{BR21} A. Buryak, P. Rossi. Extended r-spin theory in all genera and the discrete KdV hierarchy. Adv. Math. 386 (2021), 107794.
	
	\bibitem{BPS1} A.~Buryak, H.~Posthuma, S.~Shadrin. A polynomial bracket for the Dubrovin-Zhang hierarchies. J. Differential Geom. 92 (2012), no. 1, 153-185. 
	
	\bibitem{BPS2} A.~Buryak, H.~Posthuma, S.~Shadrin. On deformations of quasi-Miura transformations and the Dubrovin-Zhang bracket. J. Geom. Phys. 62 (2012), no. 7, 1639-1651.
	
	\bibitem{BS22} A.~Buryak, S.~Shadrin. Tautological relations and integrable systems. arXiv: 2210.07552.
	
	\bibitem{BSSZ} A.~Buryak, S.~Shadrin, L.~Spitz, D.~Zvonkine. Integrals of $\psi$-classes over double ramification cycles. Amer. J. Math. 137 (2015), no. 3, 699--737.
	
	\bibitem{Chi08} A.~Chiodo. Towards an enumerative geometry of the moduli space of twisted curves and $r$th roots. Compos. Math. 144 (2008), no. 6, 1461--1496.
	
	\bibitem{DZ} B.~Dubrovin, Y.~Zhang. Normal forms of hierarchies of integrable PDEs, Frobenius manifolds and Gromov-Witten invariants. arXiv: math/0108160.
	
	\bibitem{rELSV} P.~Dunin-Barkowski, R.~Kramer, A.~Popolitov, S.~Shadrin. Loop equations and a proof of Zvonkine's qr-ELSV formula. Ann. Sci. \'Ec. Norm. Sup\'er. (4) 56 (2023), no. 4, 1199-1229.
	
	\bibitem{ELSV01} T.~Ekedahl, S.~Lando, M.~Shapiro, A.~Vainshtein. Hurwitz numbers and intersections on moduli spaces of curves. Invent. Math. 146 (2001), no. 2, 297--327. 
	
	\bibitem{spin} A.~Giacchetto, R.~Kramer, D.~Lewa\'nski, A.~Sauvaget. The spin Gromov-Witten/Hurwitz correspondence for $P^1$. arXiv: 2208.03259.
	
	\bibitem{Euler} A.~Giacchetto, D.~Lewa\'nski, P.~Norbury. An intersection-theoretic proof of the Harer-Zagier formula. Algebr. Geom. 10 (2023), no. 2, 130--147.
	
	\bibitem{GP99} T.~Graber, R.~Pandharipande.  Localization of virtual classes. Invent. Math. 135 (1999), 487--518.
	
	\bibitem{GV05} T.~Graber, R.~Vakil. Relative virtual localization and vanishing of tautological classes on moduli spaces of curves. Duke Math. J. 130 (2005), 1--37.
	
	\bibitem{JPPZ} F.~Janda, R.~Pandharipande, A.~Pixton, D.~Zvonkine. Double ramification cycles on the moduli spaces of curves. Publ. Math. Inst. Hautes {\'E}tudes Sci. 125 (2017), no. 1, 221--266.
	
	\bibitem{JPT11} P.~Johnson, R.~Pandharipande, H.~H.~Tseng. Abelian Hurwitz-Hodge integrals. Mich. Math. J. 60 (2011), no. 1, 171--198. 
	
	\bibitem{lpsz} D.~Lewa\'nski, A.~Popolitov, S.~Shadrin, D.~Zvonkine. Chiodo formulas for the $r$-th roots and topological recursion. Lett. Math. Phys. 107 (2017), no. 5, 901--919.
	
	\bibitem{Mum83} D.~Mumford. Towards an enumerative geometry of the moduli space of curves. Arithmetic and geometry, Vol. II, 271--328, Progr. Math. 36, Birkh\"auser Boston, Boston, MA, 1983.
	
	\bibitem{p} A.~Pixton. DR cycle polynomiality and related results. \href{https://public.websites.umich.edu/~pixton/papers/DRpoly.pdf}{https://public.websites.umich.edu/~pixton/papers/DRpoly.pdf}.
	%
\end{thebibliography}
\end{document}